\numberwithin{equation}{section}
\newtheorem{theorem}{Theorem}[section]
\newtheorem{proposition}[theorem]{Proposition}
\newtheorem{lemma}[theorem]{Lemma}
\newtheorem{corollary}[theorem]{Corollary}
\theoremstyle{remark}
\newtheorem{remark}[theorem]{Remark}
\newcounter{FNC}[page]
\def\fauxfootnote#1{{\addtocounter{FNC}{2}\Magenta{$^\fnsymbol{FNC}$}%
     \let\thefootnote\relax\footnotetext{\Magenta{$^\fnsymbol{FNC}$#1}}}}
\newcommand{\defcolor}[1]{\RoyalBlue{#1}}
\newcommand{\demph}[1]{\defcolor{{\sl #1}}}
\newcommand{\PP}{{\mathbb P}}
\newcommand{\RR}{{\mathbb R}}
\newcommand{\CC}{{\mathbb C}}
\newcommand{\calH}{{\mathcal H}}
\newcommand{\frakL}{{\mathfrak L}}
\newcommand{\tb}{\textcolor{blue}}
\DeclareMathOperator{\Gr}{{\rm Gr}}
\DeclareMathOperator{\Mat}{{\rm Mat}}
\DeclareMathOperator{\Wr}{{\rm Wr}}
\DeclareMathOperator{\GL}{{\it GL}}
\DeclareMathOperator{\Sp}{{\it Sp}}
\DeclareMathOperator{\SO}{{\it SO}}
\DeclareMathOperator{\Span}{{\rm span}}
\DeclareMathOperator{\ev}{{\it ev}}
\title{Injectivity of generalized Wronski maps}
\author{Yanhe Huang}
\address{Yanhe Huang \\
         Department of Mathematics\\
         University of California\\
         Berkeley\\
         CA\ 94720\\
         USA}
\email{yanhe\_huang@berkeley.edu}
\author{Frank Sottile}
\address{Frank Sottile \\
         Department of Mathematics\\
         Texas A\&M University\\
         College Station\\
         Texas \ 77843\\
         USA}
\email{sottile@math.tamu.edu}
\urladdr{\url{http://www.math.tamu.edu/~sottile}}
\author{Igor Zelenko}
\address{Igor Zelenko\\
         Department of Mathematics\\
         Texas A\&M University\\
         College Station\\
         Texas \ 77843\\
         USA}
\email{zelenko@math.tamu.edu}
\urladdr{\url{http://www.math.tamu.edu/~zelenko}}
\thanks{Research of Sottile supported in part by NSF grant DMS-1501370}
\thanks{Research of Zelenko supported in part by NSF grant DMS-1406193}
\thanks{This work is based on the 2016 Master's thesis of Huang}
\keywords{Wronski map, Pl\"{u}cker embedding, curves in Lagrangian Grassmannian,
   self-adjoint linear differential operator, symmetric linear control system, pole placement map}
\subjclass[2010]{14M15, 34A30, 93B55}
\begin{document}

\begin{abstract}
 We study linear projections on Pl\"ucker space whose restriction to the Grassmannian is a non-trivial branched
 cover.
 When an automorphism of the Grassmannian preserves the fibers, we show that the Grassmannian is necessarily
 of $m$-dimensional linear subspaces in a  symplectic vector space of dimension $2m$, and the linear map is
 the Lagrangian involution.
 The Wronski map for a self-adjoint linear differential operator and pole placement map for
 symmetric linear systems are natural examples.
\end{abstract}

\maketitle

%
\section*{Introduction}

Some applications of geometry involve maps on Grassmannians that are projections on the ambient
Pl\"ucker space---these generalized Wronski maps include the classical Wronskian in
differential equations and pole placement map in feedback control of linear systems~\cite{Delch}.
When the center of projection is disjoint from the Grassmannian and has maximal dimension, the image is a
projective space of the same dimension as the Grassmannian and the map is a branched cover of degree equal to
the degree of the Grassmannian.
When the center of projection does not have maximal dimension, the image of the
  Grassmannian is a proper subset of projective space.
If in addition the center is general, there is an open subset of the
Grassmannian on which the map is injective.
We consider generalized Wronski maps when the center does not have maximal dimension and yet the map is
a non-trivial branched cover.
This occurs when an automorphism of the Grassmannian preserves each fiber and thus induces the
identity on the image.

Chow~\cite{Chow} classified automorphisms of the Grassmannian of $m$-planes in a complex vector space $V$.
When $2m\neq\dim V$, they are induced by automorphisms of $\PP(V)$, and when $2m=\dim V$ there are
additional automorphisms induced by isomorphisms between $\PP(V)$ and its dual projective space $\PP(V^*)$.
Given a projection with center $Z$ which does not meet the Grassmannian where the automorphism $\varphi$
preserves the fibers of the projection, we show that $V$ is a symplectic vector space of dimension $2m$ with
$\varphi$ the Lagrangian involution $\frakL$, and that $Z$ contains the $(-1)$-eigenspace of $\frakL$.
We also show that any generalized Wronski map of degree 2 has this form.

Such maps arise in nature.
One source is the Wronski map on $m$-dimensional spaces of functions that satisfy a self-adjoint linear
differential equation $L y=0$ of order $2m$.
Another is the pole placement map for a symmetric linear system.
In both cases, the center $Z$ strictly contains the $(-1)$-eigenspace of $\frakL$, and in fact contains all
irreducible summands under the action of the symplectic group, except the one meeting the Grassmannian.
We call such a generalized Wronski map self-adjoint.
This structure (Lagrangian involution preserving the fiber) of the classical Wronski map (when
$Ly=y^{(2m)}$) implies a congruence modulo four on the number of real solutions to certain problems in the
real Schubert Calculus~\cite{HSZII,HSZI}, and was our motivation.

While this classification of projections that induce a non-trivial branched cover on the Grassmannian is
complete when the fibers are preserved by an automorphism of the Grassmannian, we do not know if there are
other such projections not coming from an automorphism of the Grassmannian.
More specifically, we ask the following questions:
Are there any generalized Wronski maps with degree exceeding 2 not arising from an automorphism of the
Grassmannian and whose image is not a projective space?
What is the case when the center of the projection meets the Grassmannian, but the projection still has finite
fibers over an open subset of its image?

In Section~\ref{S:PnG} we prove our main results, Corollary~\ref{C:main} and Theorem~\ref{Th:One},  about
projections on Grassmannians and automorphisms.
Section~\ref{S:Wronskian} shows how self-adjoint linear differential operators are a source of such maps.
In Section~\ref{S:control}, we explain how symmetric linear control systems are another source.

In Sections~\ref{S:Wronskian} and~\ref{S:control}, the projection comes from a curve in a
Grassmannian. 
For a linear control system this is the Hermann-Martin curve~\cite{HM78}.
For a linear differential operator this is  a curve of
osculating spaces  associated  to the operator \cite{shapiro1,W1906}.
These osculaating curves are important in other mathematical topics, such as Sturmian theory of self-adjoint
linear differential equations~\cite{ArnoldSturm,ovsienko}, general linear differential
equations~\cite{shapiro3}, differential geometry of nonlinear differential equations~\cite{doub2},
variational problems~\cite{var}, rank $2$ distributions~\cite{doubzel2}, and single-input nonlinear
control systems~\cite{dzcont}.

We thank Mark Pankov, who pointed us to the work of Chow and the anonymous referee for valuable comments.

\section{Projections and Grassmannians}\label{S:PnG}

We use elementary algebraic geometry as may be found in Harris' vivid book~\cite{Harris}.

\subsection{Projections}\label{SS:projection}

Let \defcolor{$V$} be a finite-dimensional complex vector space and write \defcolor{$\PP(V)$} for the
projective space of one-dimensional linear subspaces of $V$.
For any proper linear subspace $Z$ of $V$ the \demph{projection $\pi_Z$} with
\demph{center} $\PP(Z)$,
 \begin{equation}\label{Eq:projection}
  \pi_Z\ \colon\ \PP(V)\smallsetminus\PP(Z)\ \longrightarrow\ \PP(V/Z)\,,
 \end{equation}
is induced by the quotient map $V\twoheadrightarrow V/Z$.
This projection is only a rational map on $\PP(V)$ as it is not defined for points of $\PP(Z)$.

If $\defcolor{X}\subset\PP(V)$ is an algebraic variety that is disjoint from the center $\PP(Z)$, then we
may restrict $\pi_Z$ to $X$ and obtain a map $\pi_Z\colon X\to \PP(V/Z)$.
Note that $\dim X \leq \dim\PP(V/Z)$.
When the dimensions are equal, so that $\PP(Z)$ has the maximal dimension of a linear subspace disjoint from
$X$, then the projection is surjective, realizing $X$ as a branched cover of $\PP(V/Z)$ of degree equal to the
degree of $X$.
Indeed, a point $p\in\PP(V/Z)$ corresponds to a linear subspace $M$ containing $Z$ as a hyperplane in that
$\dim M=1+\dim Z$.
Then the inverse image of $p$ under $\pi_Z$ in $X$ consists of the points $M\cap X$, which is necessarily
zero-dimensional as $Z\cap X=\emptyset$ and $Z$ is a hyperplane in $M$.
The number of points in $M\cap X$, counted with multiplicity, is the degree of $X$ in $\PP(Z)$.
As the projection map is proper and has finite fibers, it is a finite
morphism~\cite[Part~4, Corollaire~18.12.4]{EGAIV}.

The automorphism group of $\PP(V)$ is the \demph{projective linear group $\Gamma(V)$}.
This is the quotient of the group $\GL(V)$ of invertible linear transformations of $V$ by scalars (which act
trivially on $\PP(V)$).
Thus any automorphism $\varphi$ of $\PP(V)$ is induced by a linear map $\defcolor{\psi}\in\GL(V)$.
This \demph{lift} $\psi$ is well-defined up to multiplication by a scalar.
If $Z\subset V$ is a linear subspace such that $\PP(Z)$ is preserved by $\varphi$, then $\psi$
preserves $Z$ and acts on $V/Z$.
This induces an automorphism $\varphi$ on $\PP(V/Z)$ and the projection map $\pi_Z$ is $\varphi$-equivariant,
\[
    \varphi(\pi_Z(v))\ =\ \pi_Z(\varphi(v))\qquad \mbox{ for }v\in\PP(V)\smallsetminus\PP(Z)\,.
\]

\begin{lemma}\label{L:projection_eigenspace}
 Suppose that $\varphi\in\Gamma(V)$ has finite order and that it preserves the fibers of a projection
 $\pi_Z\colon\PP(V)\smallsetminus\PP(Z)\to\PP(V/Z)$.
 If $\psi$ is any lift of $\varphi$, then $Z$ contains all eigenspaces of
 $\psi$ except one.
\end{lemma}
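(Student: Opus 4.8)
The plan is to exploit the finite order of $\varphi$ to diagonalize a lift $\psi$, and then to derive a contradiction from any projective line joining eigenvectors in two distinct eigenspaces that both escape $Z$. First I would note that if $\varphi$ has order $d$ in $\Gamma(V)$, then any lift $\psi$ satisfies $\psi^d=\lambda\cdot\mathrm{id}_V$ for some $\lambda\in\CC$, $\lambda\neq 0$; writing $\lambda=\nu^d$, the operator $\nu^{-1}\psi$ is annihilated by $x^d-1$, which has distinct roots, so $\psi$ is diagonalizable. Let $V=\bigoplus_i V_i$ be its eigenspace decomposition, with pairwise distinct eigenvalues $\mu_i$ on the summands $V_i$. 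I would also record that, since $\varphi$ maps each fiber of $\pi_Z$ to itself and these fibers partition $\PP(V)\smallsetminus\PP(Z)$, the automorphism $\varphi$ preserves $\PP(V)\smallsetminus\PP(Z)$ and hence its complement $\PP(Z)$; therefore $\psi(Z)=Z$.

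The heart of the argument is to show that at most one eigenspace $V_i$ fails to lie in $Z$. Suppose instead that distinct eigenspaces $V_i$ and $V_j$ each meet $V\smallsetminus Z$, and choose $v_i\in V_i\smallsetminus Z$ and $v_j\in V_j\smallsetminus Z$ (automatically linearly independent, since $V_i\cap V_j=0$). Using $\psi(Z)=Z$ and $\mu_i\neq\mu_j$, I would check that no nonzero vector of $W:=\Span(v_i,v_j)$ lies in $Z$: a relation $\alpha v_i+\beta v_j\in Z$ gives $\psi(\alpha v_i+\beta v_j)\in Z$, and subtracting $\mu_i$ times the relation leaves $\beta(\mu_j-\mu_i)v_j\in Z$, forcing $\beta=0$ and then $\alpha=0$. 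Hence $\ell:=\PP(W)$ is a line disjoint from $\PP(Z)$, and $\pi_Z$ restricts to an \emph{injection} of $\ell$ into $\PP(V/Z)$. Since $v_i,v_j$ are eigenvectors, $\psi$ preserves $W$, so $\varphi(\ell)=\ell$; and for any $p\in\ell$ the point $\varphi(p)$ lies in $\ell$ and, because $\varphi$ preserves the fiber of $\pi_Z$ through $p$, satisfies $\pi_Z(\varphi(p))=\pi_Z(p)$, whence $\varphi(p)=p$ by injectivity of $\pi_Z|_\ell$. Thus $\varphi$ fixes $\ell$ pointwise. But $\varphi|_\ell$ is induced by $\mathrm{diag}(\mu_i,\mu_j)$ on $W$ with $\mu_i\neq\mu_j$, so it fixes only the two points $[v_i]$ and $[v_j]$ of $\ell$---a contradiction.

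Therefore at most one $V_i$ escapes $Z$; since $Z\subsetneq V$ cannot contain $\bigoplus_i V_i=V$, exactly one does, and every other eigenspace lies in $Z$. I expect the only delicate point to be the verification that $\ell$ is genuinely disjoint from $\PP(Z)$, so that $\pi_Z|_\ell$ is an honest injection rather than merely dominant: it is precisely this injectivity that upgrades ``$\varphi$ preserves each fiber'' to ``$\varphi$ fixes $\ell$ pointwise,'' and everything else is standard linear algebra or formal.
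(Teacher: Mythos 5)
Your proof is correct, and it takes a recognizably different route from the paper's at the key step. Both arguments begin the same way: finite order gives a semisimple (diagonalizable) lift $\psi$, and fiber-preservation forces $\varphi(\PP(Z))=\PP(Z)$, hence $\psi(Z)=Z$. The paper then finishes in one stroke on the quotient: since $\varphi$ preserves every fiber it induces the identity on $\PP(V/Z)$, so $\psi$ acts on $V/Z$ as a scalar $\lambda$, and every eigenvector with eigenvalue $\neq\lambda$ must map to zero in $V/Z$, i.e.\ lie in $Z$. You instead argue by contradiction upstairs: two eigenspaces escaping $Z$ would give a line $\ell=\PP(W)$ disjoint from $\PP(Z)$ (your invariance computation showing $W\cap Z=0$ is the genuinely necessary delicate point, and it is right), on which $\pi_Z$ is injective, so fiber-preservation forces $\varphi$ to fix $\ell$ pointwise, contradicting the distinct eigenvalues $\mu_i\neq\mu_j$. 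In effect you have inlined, at the level of $V$, the standard fact the paper invokes on $V/Z$ (a linear map inducing the identity on a projective space is a scalar), whose usual proof is exactly your two-eigenvector argument. The paper's version is shorter and more conceptual, and it is the form reused later (the scalar action on $V/Z$); your version is more local and self-contained, needing only fiber-preservation along a single line rather than the identification of the induced map on all of $\PP(V/Z)$. One cosmetic remark: your step ``$\varphi$ preserves $\PP(V)\smallsetminus\PP(Z)$, hence its complement'' deserves the half-sentence that $\varphi^{-1}$ is a power of $\varphi$ (finite order) and so also preserves the fibers, which upgrades the inclusion to an equality; with that said, everything is complete.
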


\begin{proof}
 As $\varphi$ preserves the fibers of $\pi_Z$, it acts as the identity on $\PP(V/Z)$, so that
 $\psi$ acts as scalar multiplication on $V/Z$.
 Since $\varphi$ has finite order, $\psi$ is semisimple, and so
 $V$ is the direct sum of its eigenspaces.
 Since $\psi$ acts as multiplication by a scalar $\lambda$ on $V/Z$, the kernel
 $Z$ must contain all eigenspaces of  $\psi$ with eigenvalue different from $\lambda$.
\end{proof}

In Lemma~\ref{L:projection_eigenspace} we may choose a lift $\psi$ that acts
trivially on the quotient $V/Z$.

\subsection{Automorphisms of Grassmannians}\label{SS:AutGrass}
Let $1\leq m<n=\dim V$ be a positive integer.
The Grassmannian \defcolor{$\Gr_mV$} of $m$-dimensional linear subspaces of $V$ is a smooth projective algebraic
variety of dimension $m(n{-}m)$.
We have $\Gr_1V=\PP(V)$.
The Grassmannian has a natural Pl\"ucker embedding induced by $m$th exterior powers
\[
   \Gr_mV\ \ni\ H\ \longmapsto\ \wedge^mH\ \in\ \PP(\wedge^m V)\,.
\]
(Since $\dim H=m$, the $m$th exterior power $\wedge^mH$ of $H$ is a 1-dimensional subspace of $\wedge^mV$.)
We will always consider $\Gr_mV$ as a subvariety of this \demph{Pl\"ucker space},
\defcolor{$\PP(\wedge^mV)$}.

A linear isomorphism $\psi\in\GL(V)$ acts as an automorphism of both $\Gr_mV$ and $\PP(\wedge^m V)$, and the
Pl\"ucker embedding is $\psi$-equivariant.
Since scalars act trivially on both $\Gr_mV$ and $\PP(\wedge^m V)$, elements of the projective linear group
$\Gamma(V)$ act as automorphisms of $\Gr_mV$, and every such automorphism is the restriction of one on
Pl\"ucker space.

Chow~\cite{Chow} (for a modern exposition, see~\cite{Pankov}) showed that if $2m\neq\dim V$, these are the
only automorphisms of $\Gr_mV$ and  when $2m=\dim V$,  $\Gamma(V)$ has index two in the automorphism group,
forming its identity component.
Elements of the non-identity component are induced by linear isomorphisms $\psi\colon V^*\to V$.
We explain this.

Fix a linear isomorphism $\psi\colon V^*\to V$ and let $m$ be between $1$ and $n-1$, where $n=\dim V$.
For $H\in\Gr_mV$, its annihilator \defcolor{$H^\perp$} is an element of $\Gr_{n-m}(V^*)$, and we define
\defcolor{$H^\psi$} to be $\psi(H^\perp)\in\Gr_{n-m}(V)$.
This map $H\mapsto H^\psi$ is an isomorphism that is induced by an isomorphism of Pl\"ucker space as follows.

Fix a volume form $\Omega\colon\wedge^nV \xrightarrow{\sim}\CC$.
Then there is a pairing $\wedge^mV\times\wedge^{n-m}V\to \CC$ given by
$(\alpha, \beta)\mapsto \Omega(\alpha\wedge\beta)$.
This is nondegenerate, and the identification $(\wedge^{n-m}V)^*=\wedge^{n-m}V^*$ gives the
\demph{Hodge star operator}  $\defcolor{*_\Omega}\colon \wedge^mV \xrightarrow{\sim}\wedge^{n-m}V^*$ which
satisfies $*_\Omega(\wedge^m H)= \wedge^{n-m}H^\perp$.
Composing with the map induced by $\psi$ gives the isomorphism
 \begin{equation}\label{Eq:Psimap}
   \alpha\ \longmapsto\ \alpha^\psi\ \colon\
    \wedge^mV\ \xrightarrow{\ \sim\ }\ \wedge^{n-m}V\,,
 \end{equation}
with the property that $(\wedge^m H)^\psi=\wedge^{n-m}H^\psi$.
The Hodge star operator depends upon the choice of $\Omega$, with any two choices differing by a nonzero
scalar.
This ambiguity is removed by viewing~\eqref{Eq:Psimap} as an isomorphism of Pl\"ucker space,
\[
   \defcolor{(\;)^\psi}\ \colon\ \PP(\wedge^mV)\ \xrightarrow{\ \sim\ }\ \PP(\wedge^{n-m}V)\,,
\]
that extends the map $H\mapsto H^\psi$ on the Grassmannian.
When $2m=\dim V$ this map $(\;)^\psi$ is an automorphism of the Grassmannian $\Gr_mV$ and of Pl\"ucker space
$\PP(\wedge^mV)$.

\begin{proposition}[Chow~\cite{Chow}, Thm.~1]\label{Prop:Chow}
 If $2m\neq\dim V$, then $\Gamma(V)$ is the automorphism group of the Grassmannian $\Gr_mV$.
 When $2m=\dim V$, $\Gamma(V)$ has index two in the automorphism group of the Grassmannian $\Gr_mV$, where the
 elements not from $\Gamma(V)$ have the form $(\;)^\psi$ for some isomorphism $\psi\colon V^*\xrightarrow{\sim}V$.
\end{proposition}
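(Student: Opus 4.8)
We sketch the argument, which is Chow's \cite{Chow} (see also \cite{Pankov}). The strategy has two halves: first reduce the classification of automorphisms of the \emph{variety} $\Gr_m V$ to a question about linear automorphisms of Pl\"ucker space, and then reconstruct the linear structure of $V$ (or of $V^*$) from incidence geometry intrinsic to $\Gr_m V$. For the first half, recall that $\mathrm{Pic}(\Gr_m V)\cong\mathbb Z$ is generated by the Pl\"ucker bundle $\calO(1)$, which is ample; so any automorphism $\varphi$ satisfies $\varphi^*\calO(1)\cong\calO(1)$, since $\varphi^*$ preserves ampleness. Then $\varphi$ acts linearly on $H^0(\Gr_m V,\calO(1))=(\wedge^m V)^*$, and because $\Gr_m V$ is embedded by the complete linear system $|\calO(1)|$, the automorphism $\varphi$ is the restriction of a linear automorphism of $\PP(\wedge^m V)$ preserving $\Gr_m V$. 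Thus it suffices to show that the group of such linear automorphisms is $\Gamma(V)$ when $2m\ne\dim V$, and has the asserted form when $2m=\dim V$.

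For the second half I would first observe that a linear automorphism $g$ preserving $\Gr_m V$ permutes the lines lying on $\Gr_m V$ — the pencils $\{H:A\subseteq H\subseteq B\}$ with $\dim A=m{-}1$, $\dim B=m{+}1$, and $A\subseteq B$ — and therefore preserves the \emph{adjacency} relation $\dim(H_1\cap H_2)=m{-}1$ on points of $\Gr_m V$. A short argument with triples of pairwise adjacent $m$-planes then shows that the maximal cliques of this relation are exactly the two families of maximal linear subspaces of $\Gr_m V$: the stars $\{H:A\subseteq H\}\cong\PP(V/A)$, of Pl\"ucker dimension $\dim V-m$ and indexed by $A\in\Gr_{m-1}V$, and the tops $\{H:H\subseteq B\}\cong\PP(\wedge^m B)$, of Pl\"ucker dimension $m$ and indexed by $B\in\Gr_{m+1}V$. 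Since $g$ is linear it sends each star or top to a maximal linear subspace of the same Pl\"ucker dimension, so when $2m\ne\dim V$ it preserves each of the two families, while when $2m=\dim V$ it either preserves both families or interchanges them; in the latter case, composing $g$ with a fixed duality $(\;)^{\psi_0}$, which interchanges stars with tops, returns us to the family-preserving case.

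When $g$ preserves the family of stars, fix a star $\PP(V/A)\subseteq\Gr_m V$: being linear, $g$ restricts to a projective-linear isomorphism $\PP(V/A)\xrightarrow{\ \sim\ }\PP(V/A')$, hence is induced by a linear isomorphism $V/A\to V/A'$. Tracking, for a line $\ell\subseteq V$ and varying $A$, the images of the points $(\ell+A)/A$, a compatibility check assembles these into a single bijection $\ell\mapsto\ell'$ of $\PP(V)$, and the preservation of collinearity inside every star forces this bijection to be a collineation of $\PP(V)$. By the Fundamental Theorem of Projective Geometry it comes from a semilinear automorphism of $V$, and because $g$ is an automorphism of an algebraic variety over $\CC$ the accompanying field automorphism is trivial; so the collineation lies in $\Gamma(V)$, and comparing its action with that of $g$ on the stars, which span $\PP(\wedge^m V)$, gives $g\in\Gamma(V)$. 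When $2m=\dim V$, one checks directly that $(\;)^{\psi_0}\notin\Gamma(V)$, that $(\;)^{\psi_0}$ composed with itself lies in $\Gamma(V)$, and that $(\;)^{\psi_0}$ normalizes $\Gamma(V)$, whence $\Gamma(V)$ has index two and the remaining coset is precisely the set of maps $(\;)^\psi$.

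The main obstacle is this last reduction: passing from the combinatorial statement that $g$ preserves adjacency to the algebraic conclusion that $g$ is induced by $\GL(V)$ (or by an isomorphism $V^*\xrightarrow{\sim}V$). This is the real content of Chow's paper and rests on the Fundamental Theorem of Projective Geometry. One could instead identify $\Gr_m V$ with the homogeneous space of a maximal parabolic in $\GL(V)$ and read off its automorphism group from the automorphisms of the type-$A_{n-1}$ Dynkin diagram fixing the corresponding node — nontrivial exactly when $2m=\dim V$ — but this trades one deep input for another.
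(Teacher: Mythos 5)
The paper offers no proof of this proposition to compare against: it is quoted verbatim from Chow~\cite{Chow} (Thm.~1), with Pankov~\cite{Pankov} cited for a modern exposition. Your sketch is a correct outline of the standard argument from those sources. The first half --- $\mathrm{Pic}(\Gr_mV)\cong\mathbb{Z}$ generated by the ample Pl\"ucker bundle, so any automorphism acts on $H^0(\Gr_mV,\calO(1))=(\wedge^mV)^*$ and extends to a linear automorphism of $\PP(\wedge^mV)$ --- is the usual modern linearization step (Chow himself works directly with adjacency-preserving bijections, a stronger statement), and the second half --- lines on $\Gr_mV$, preservation of the adjacency relation, identification of the maximal linear subspaces as stars $\PP(V/A)$ of dimension $\dim V-m$ and tops $\PP(\wedge^mB)$ of dimension $m$, the dimension count separating the two families when $2m\neq\dim V$, and the Fundamental Theorem of Projective Geometry --- is exactly Chow's incidence-geometric argument. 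Two elisions would need attention in a full write-up: when $2m=\dim V$ you need a connectedness argument to show that if one star maps to a top then every star does (this is what licenses composing with a fixed duality $(\;)^{\psi_0}$ once and for all); and the clique analysis, as well as the index-two conclusion, requires $1<m<\dim V-1$ --- for $m=1$ adjacency is trivial, and for $\Gr_1\CC^2=\PP^1$ the maps $(\;)^{\psi}$ already lie in $\Gamma(V)$, so your claim that $(\;)^{\psi_0}\notin\Gamma(V)$ fails there (an edge case the cited statement itself leaves implicit). These are standard details; as a sketch your proposal is sound and faithful to the classical proof.
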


We show explicitly how the square of the map $H\mapsto H^\psi$ for $\psi\colon V^*\xrightarrow{\sim}V$ is
induced by an element of $\GL(V)$.
For $v\in V$, let $\defcolor{\chi(v)}\in V^*$ be the linear map $V\ni u \mapsto v(\psi^{-1}(u))$.
Writing elements of $V$ as linear maps on $V^*$, this is $u(\chi(v))=v(\psi^{-1}(u))$.
Then $\chi\colon V\to V^*$ is an isomorphism as $\psi$ is an isomorphism.
Set $\defcolor{A}:=\psi\circ\chi\in\GL(V)$.

\begin{lemma}\label{L:square}
 For any $H\in\Gr_kV$, $(H^\psi)^\psi=A(H)$.
\end{lemma}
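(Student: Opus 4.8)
The plan is to unwind the two definitions and reduce everything to the elementary behaviour of annihilators under a linear isomorphism. The one fact I need is this: if $T\colon U\xrightarrow{\sim}W$ is an isomorphism of finite-dimensional vector spaces, $S\subseteq U$ a subspace, and $T^\vee\colon W^*\to U^*$ the transpose, then $\bigl(T(S)\bigr)^\perp=(T^\vee)^{-1}(S^\perp)$; indeed $f\in\bigl(T(S)\bigr)^\perp$ iff $f(T(s))=0$ for all $s\in S$ iff $T^\vee(f)\in S^\perp$. I will also use biduality: under the canonical identification $V=V^{**}$ one has $H^{\perp\perp}=H$ for every subspace $H\subseteq V$.

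First I would compute $(H^\psi)^\perp$. By definition $H^\psi=\psi(H^\perp)$ with $H^\perp\subseteq V^*$, so applying the fact above with $T=\psi\colon V^*\to V$ and $S=H^\perp$ gives $(H^\psi)^\perp=(\psi^\vee)^{-1}(H^{\perp\perp})=(\psi^\vee)^{-1}(H)$, where $\psi^\vee\colon V^*\to V^{**}=V$ is the transpose of $\psi$ read through the identification $V^{**}=V$, so that $(\psi^\vee)^{-1}$ is viewed as an isomorphism $V\to V^*$. Consequently $(H^\psi)^\psi=\psi\bigl((\psi^\vee)^{-1}(H)\bigr)$, and the lemma reduces to the single identity $\chi=(\psi^\vee)^{-1}$, equivalently $A=\psi\circ\chi=\psi\circ(\psi^\vee)^{-1}$.

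It then remains to verify $\psi^\vee\circ\chi=\mathrm{id}_V$ directly from the defining relation $u(\chi(v))=v(\psi^{-1}(u))$ for $u,v\in V$, using the paper's convention that elements of $V$ are linear functionals on $V^*$. Writing $\langle f,w\rangle:=f(w)$ for the pairing of $f\in V^*$ with $w\in V$, the defining relation reads $\langle\chi(v),u\rangle=\langle\psi^{-1}(u),v\rangle$, and substituting $u=\psi(g)$ for $g\in V^*$ gives $\langle\chi(v),\psi(g)\rangle=\langle g,v\rangle$. On the other hand the transpose $\psi^\vee$ is characterized by $\langle g,\psi^\vee(\phi)\rangle=\langle\phi,\psi(g)\rangle$ for all $g,\phi\in V^*$; taking $\phi=\chi(v)$ yields $\langle g,\psi^\vee(\chi(v))\rangle=\langle\chi(v),\psi(g)\rangle=\langle g,v\rangle$ for every $g\in V^*$, hence $\psi^\vee(\chi(v))=v$. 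Thus $\chi=(\psi^\vee)^{-1}$, so $(H^\psi)^\psi=\psi(\chi(H))=A(H)$, and since nothing in the argument used the dimension of $H$, it holds for any $H\in\Gr_kV$.

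The only real obstacle is bookkeeping: one must be scrupulous about which space each object inhabits and about the canonical identification $V=V^{**}$, since the transpose of $\psi\colon V^*\to V$ a priori lands in $V^{**}$, and the content of the lemma is precisely that, after this identification, it is the inverse of the map $\chi$ built into the definition of $A$. A basis computation—fix a basis of $V$, use the dual basis to identify $V^*\cong V$, represent $\psi$ by an invertible matrix $P$, so that $\chi$ becomes $(P^{-1})^{\mathsf T}$ and annihilation becomes orthogonal complement with respect to the standard form—gives the same conclusion concretely, but the coordinate-free version above is shorter and avoids choosing the volume form $\Omega$ and the Hodge star explicitly.
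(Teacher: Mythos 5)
Your proof is correct and follows essentially the same route as the paper: both arguments unwind the defining relation $u(\chi(v))=v(\psi^{-1}(u))$ to show $(H^\psi)^\perp=\chi(H)$ and then apply $\psi$ to get $(H^\psi)^\psi=\psi(\chi(H))=A(H)$. The only difference is organizational: the paper checks directly that $\chi(H)$ annihilates $H^\psi$ (with the equality of the two $k$-dimensional spaces left implicit), whereas you get the equality in one stroke by identifying $\chi=(\psi^\vee)^{-1}$ through the transpose--annihilator formula and biduality.
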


\begin{proof}
 Since $v(\psi^{-1}(u))=u(\chi(v))$, we have that $\chi(H)$ annihilates $H^\psi=\psi(H^\perp)$.
 Then we see that
 $(H^\psi)^\psi =\psi((H^\psi)^\perp)=\psi\circ\chi(H)=A(H)$.
\end{proof}

\subsection{Symplectic vector spaces}\label{SS:symplectic}

A reference for this material is~\cite{GW}, particularly pages 236--7.
An isomorphism $\psi\colon V^*\to V$ is \demph{alternating} if for any $u,v\in V^*$ we have that
\[
    v(\psi(u))\ =\ -u(\psi(v))\,.
\]
Equivalently, if the bilinear form $\defcolor{\langle u,v\rangle} := v(\psi^{-1}(u))$ on $V$ is
nondegenerate and alternating.
Then in Lemma~\ref{L:square} the map $\chi=-\psi^{-1}$ and $A=-I$, 
$V$ has even dimension \defcolor{$2m$}, and the form is represented by an element
$\defcolor{\omega}\in \wedge^2 V^*$ that
is non-degenerate in that $0\neq \wedge^m\omega\in\wedge^{2m}V^*$.
Write $\defcolor{\theta}\in\wedge^2 V$ for the 2-form $\psi(\omega)$.
An even-dimensional vector space with these structures is a \demph{symplectic vector space}, and
$\omega$ is the \demph{symplectic form} on $V$.
The symplectic group \defcolor{$\Sp_\omega(V)$} is the subgroup of $\GL(V)$ preserving the symplectic form
$\omega$.

For  $H\in\Gr_kV$,  $H^\psi\in \Gr_{2m-k}V$ is its annihilator under the symplectic form
\[
   H^\psi\ =\ \{v\in V\mid \langle v,w\rangle = 0\ \mbox{\ for all }w\in H\}\,.
\]
Observe that $(H^\psi)^\psi=H$ as $A=-I$.
For $k\leq m$ a subspace $H\in\Gr_kV$ is \demph{isotropic} if $H\subset H^\psi$.
\demph{Lagrangian} subspaces are isotropic subspaces with maximal dimension $m$.
The symplectic group acts on isotropic subspaces in $\Gr_kV$, which form an orbit.

Let $k\leq m$.
A tensor $v\in\wedge^k V$ is \demph{isotropic} if it lies in a one-dimensional subspace $\wedge^k H$ where
$H\in\Gr_kV$ is isotropic.
Let \defcolor{$\calH(\wedge^k V,\omega)$} be the subspace of $\wedge^k V$ spanned by isotropic tensors.
This is an irreducible representation of $\Sp_\omega(V)$ and the collection of these for
$1\leq k\leq m$ are its fundamental representations.
The exterior product $\wedge^k V$ decomposes as a sum of fundamental representations,
 \begin{equation}\label{Eq:Decomposition}
   \wedge^k V\ =\
     \bigoplus_{p=\max\{0,\lceil (k-m)/2\rceil\}}^{\lfloor k/2\rfloor}
      \wedge^p\theta\wedge  \calH(\wedge^{k-2p}V,\omega)\,.
 \end{equation}

Define $\defcolor{\frakL}\colon\wedge^kV\to\wedge^{2m-k}V$
to be the linear map $(\;)^\psi$ of Subsection~\ref{SS:AutGrass}, using the volume form
$\defcolor{\Omega}:=(-1)^{\binom{m}{2}}\frac{1}{m!}\wedge^m\omega$.
This satisfies $\frakL(\wedge^k H)=\wedge^{2m-k} H^\psi$.
As $\frakL\colon\wedge^mV \to\wedge^m V$ and commutes with the action of  $\Sp_\omega(V)$, it acts by a scalar
on each irreducible summand in~\eqref{Eq:Decomposition} when $m=k$.

\begin{proposition}\label{P:L-action}
 The map $\frakL$ is an involution on $\wedge^m V$ and it acts as multiplication by $(-1)^p$
 on the summand $\wedge^p\theta\wedge  \calH(\wedge^{m-2p}V,\omega)$ in~$\eqref{Eq:Decomposition}$.
\end{proposition}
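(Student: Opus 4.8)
The plan is to reduce the claim to a single sign computation using representation theory, and then to carry that computation out in a symplectic basis. Since $\frakL\colon\wedge^mV\to\wedge^mV$ commutes with $\Sp_\omega(V)$ and the summands $\wedge^p\theta\wedge\calH(\wedge^{m-2p}V,\omega)$ of~\eqref{Eq:Decomposition} are pairwise non-isomorphic irreducible $\Sp_\omega(V)$-modules (they are the fundamental representations $\calH(\wedge^jV,\omega)$ for the distinct values $j=m-2p$, with the trivial module occurring when $j=0$), Schur's lemma shows that $\frakL$ preserves each summand and acts on it by a scalar $c_p$. It therefore suffices to prove $c_p=(-1)^p$: the assertion that $\frakL$ is an involution follows at once, since $\frakL^2$ then acts by $c_p^2=1$ on every summand.

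To compute $c_p$, fix a symplectic basis $e_1,\dots,e_m,f_1,\dots,f_m$ of $V$, so that $\omega=\sum_ie_i^*\wedge f_i^*$ and $\theta=\sum_ie_i\wedge f_i$. A short calculation gives $\psi(e_i^*)=-f_i$ and $\psi(f_i^*)=e_i$, and this is where the normalization of $\Omega$ enters: the factor $(-1)^{\binom m2}$ in $\Omega=(-1)^{\binom m2}\tfrac1{m!}\wedge^m\omega$ is exactly the sign reordering $e_1^*\wedge f_1^*\wedge\cdots\wedge e_m^*\wedge f_m^*$ into the standard volume form $e_1^*\wedge\cdots\wedge e_m^*\wedge f_1^*\wedge\cdots\wedge f_m^*$. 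Consequently $*_\Omega$ carries a Pl\"ucker monomial $v_I$ (a fixed-order wedge of $e$'s and $f$'s, $I$ an $m$-subset of the basis indices) to $\pm v_{I^c}^*$, the complementary monomial in $\wedge^mV^*$, with an explicit reordering sign, and $\frakL=(\wedge^m\psi)\circ *_\Omega$ may be evaluated monomial by monomial.

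Now apply $\frakL$ to $\theta^p\wedge\eta$, where $\eta=e_{2p+1}\wedge\cdots\wedge e_m=\wedge^{m-2p}H$ for the isotropic subspace $H=\Span(e_{2p+1},\dots,e_m)$. This vector is nonzero and lies in the summand indexed by $p$, so $\frakL(\theta^p\wedge\eta)=c_p\,\theta^p\wedge\eta$. Expanding, $\theta^p\wedge\eta=p!\sum_S\zeta_S$ over $p$-element subsets $S\subset\{1,\dots,2p\}$, where $\zeta_S=\bigl(\bigwedge_{i\in S}e_i\wedge f_i\bigr)\wedge\eta$ is, up to sign, a Pl\"ucker monomial. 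From the description of $\frakL$ above one checks that $\frakL(\zeta_S)=\pm\zeta_{\{1,\dots,2p\}\setminus S}$; since the $\zeta_S$ are, up to sign, distinct basis vectors and $S\mapsto\{1,\dots,2p\}\setminus S$ is a bijection of the indexing set, comparing coefficients in $\frakL(\theta^p\wedge\eta)=c_p\,\theta^p\wedge\eta$ forces all of these signs to equal $c_p$. Taking $S=\{1,\dots,p\}$ and tracking the reorderings---the sign converting $\zeta_S$ into its Pl\"ucker monomial cancels the one converting the output back into $\zeta_{\{p+1,\dots,2p\}}$, two factors $(-1)^{p(m-p)}$ (one from $*_\Omega$, one from a final reordering) cancel, and $\wedge^m\psi$ contributes $(-1)^p$---yields $c_p=(-1)^p$.

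I expect the sign bookkeeping in this last step to be the only real difficulty: one must verify both that the output sign genuinely does not depend on $S$ (which is what licenses reading $c_p$ off a single monomial) and that the normalization of $\Omega$ is used consistently. For the involution claim alone there is a softer route: by Lemma~\ref{L:square} with $A=-I$ the projective map $(\,)^\psi$ squares to the identity on $\Gr_mV$, and a projective automorphism fixing an irreducible, linearly nondegenerate subvariety pointwise must be the identity, so $\frakL^2$ is a scalar; but this gives neither the value of that scalar nor the individual signs $c_p$, for which the computation above---equivalently, the symplectic analogue of Weil's Hodge-star identity for primitive forms---appears to be needed.
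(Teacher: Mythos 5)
Your proposal is correct and follows essentially the same route as the paper: reduce to a scalar $c_p$ on each summand via commutation with $\Sp_\omega(V)$ (as noted just before the proposition), then evaluate $\frakL$ on $\wedge^p\theta\wedge e_{2p+1}\wedge\dotsb\wedge e_m$ in a Darboux basis by expanding into the monomials $(e\wedge f)_S\wedge e_{2p+1}\wedge\dotsb\wedge e_m$ and computing the Hodge star with the normalized $\Omega$ followed by $\wedge^m\psi$. The only cosmetic difference is that you use Schur's lemma to read the sign off the single monomial $S=\{1,\dotsc,p\}$, whereas the paper computes $\frakL(h_I)=(-1)^p h_{I^c}$ for every $I$ and finds the $I$-independent sign directly; your sketched bookkeeping lands on the same correct value $(-1)^p$.
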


Call $\frakL$ the \demph{Lagrangian involution}.

\begin{proof}
 We compute $\frakL(v)$ for $v$ lying in one of the summands of~\eqref{Eq:Decomposition}.
 A \demph{Darboux basis} for $V$ provides a normal form for $\omega$.
 Darboux bases always exist; let us fix one for $V$.
 This is a basis $e_1,f_2,\dotsc,e_m,f_m$ with a dual basis $e_1^*,f_1^*,\dotsc,e_m^*,f_m^*$ for $V^*$
 such that
\[
   \omega\ =\ e_1^*\wedge f_1^* +e_2^*\wedge f_2^* + \dotsb + e_m^*\wedge f_m^*\,.
\]
 Then $\psi(e_i^*)=-f_i$, $\psi(f_i^*)=e_i$,
 $\theta = \psi(\omega)=e_1\wedge f_1+\dotsb+e_m\wedge f_m$, and the volume form is
 $\Omega=(-1)^{\binom{m}{2}}\frac{1}{m!}\wedge^m\omega=
       (-1)^{\binom{m}{2}}e_1\wedge f_1\wedge\dotsb\wedge e_m\wedge f_m$.

 Let $0\leq p\leq \lfloor m/2\rfloor$ be an integer, set $\defcolor{[2p]}:=\{1,\dotsc,2p\}$, and let
 \defcolor{$\binom{[2p]}{p}$} be the set of $p$-element subsets of $[2p]$.
 For $I\in\binom{[2p]}{p}$ let
 $\defcolor{(e\wedge f)_I}:=e_{i_1}\wedge f_{i_1}\wedge \dotsb \wedge e_{i_p}\wedge f_{i_p}$ where
 $I=\{i_1,\dotsc,i_p\}$.
 The order of the factors $e_i\wedge f_i$ in this expression does not affect $(e\wedge f)_I$, as doublets
 $e_i\wedge f_i$ commute with all tensors.
 Set $\defcolor{h_I}:=(e\wedge f)_I\wedge e_{2p+1}\wedge\dotsb\wedge e_m$.

 The exterior power $\wedge^m V$ has a basis of tensors $v_1\wedge\dotsb\wedge v_m$ where
 $\{v_1,\dotsc,v_m\}$ is a subset of the basis $\{e_1,\dotsc,e_m,f_1,\dotsc,f_m\}$ for $V$.
 For such a tensor $v$, the map $v\mapsto \Omega(h_I\wedge v)$ is zero unless the components of $v$ are
 elements of the basis that do not appear in $h_I$.
 Thus we may suppose that $v=(e\wedge f)_{I^c}\wedge f_{2p+1}\wedge\dotsb\wedge f_m$, where
 $\defcolor{I^c}:=[2p]\smallsetminus I$ is the complement of $I$.
 Keeping track of the signs induced by permuting factors, we have
 \begin{eqnarray*}
  \Omega(h_I\wedge v)&=& \Omega((e\wedge f)_I\wedge e_{2p+1}\wedge\dotsb\wedge e_m\,\wedge\,
                                (e\wedge f)_{I^c}\wedge f_{2p+1}\wedge\dotsb\wedge f_m)\\
    &=& \Omega((e\wedge f)_I\wedge (e\wedge f)_{I^c}\wedge e_{2p+1}\wedge\dotsb\wedge
           e_m\wedge f_{2p+1}\wedge\dotsb\wedge f_m)\\
    &=& (-1)^{\binom{m}{2}} (-1)^{\binom{m-2p}{2}} \ =\ (-1)^p\,.
 \end{eqnarray*}
 Thus $*_\Omega(h_I)=(-1)^p (e^*\wedge f^*)_{I^c}\wedge f_{2p+1}^*\wedge\dotsb\wedge f_m^*$, and so
 $\frakL(h_I)=\psi(*_\Omega(h_I))=(-1)^p h_{I^c}$.

 Since
\[
   \wedge^p\theta\wedge e_{2p+1}\wedge\dotsb\wedge e_m\ =\
    \tfrac{m!}{(m-p)!} \sum_{I\subset\binom{[2p]}{p}}
      (e\wedge f)_I \wedge e_{2p+1}\wedge\dotsb\wedge e_m\,,
\]
 $\frakL(\wedge^p\theta\wedge e_{2p+1}\wedge\dotsb\wedge e_m)=
   (-1)^p \wedge^p\theta\wedge e_{2p+1}\wedge\dotsb\wedge e_m$, which completes the proof as
 $e_{2p+1}\wedge\dotsb\wedge e_m$ is an isotropic tensor in $\wedge^{m-2p}V$.
\end{proof}

We will call the restriction of a projection on $\wedge^mV$ to the Grassmannian $\Gr_mV$ a
\demph{generalized Wronski map}.

\begin{corollary}\label{C:main}
 Suppose that $V\simeq\CC^{2m}$ is symplectic, $Z\subset\wedge^mV$ contains the
 $(-1)$-eigenspace of the Lagrangian involution $\frakL$, and $\PP(Z)$ is disjoint from the Grassmannian
 $\Gr_mV$.
 Then $\frakL$ acts on the fibers of $\pi_Z$ on $\Gr_mV$, which has even degree over its image.
\end{corollary}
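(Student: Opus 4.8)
The plan is to combine Proposition~\ref{P:L-action} with the equivariance of linear projections recorded just before Lemma~\ref{L:projection_eigenspace}. Write $\wedge^mV = W_+\oplus W_-$ for the $(+1)$- and $(-1)$-eigenspaces of the involution $\frakL$; by Proposition~\ref{P:L-action}, $W_-$ is the sum of those summands $\wedge^p\theta\wedge\calH(\wedge^{m-2p}V,\omega)$ of~\eqref{Eq:Decomposition} with $p$ odd. I will first show that the hypothesis $W_-\subseteq Z$ forces $Z$ to be $\frakL$-stable with $\frakL$ acting as the identity on $\wedge^mV/Z$; this makes $\pi_Z$ equivariant for $\frakL$ with $\frakL$ acting trivially downstairs, so $\frakL$ carries each fiber of $\pi_Z$ into itself. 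I will then read the parity of the degree off the fixed locus of $\frakL$ on $\Gr_mV$.

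For the first part, given $z\in Z$ decompose $z=z_++z_-$ with $z_\pm\in W_\pm$. Since $z_-\in W_-\subseteq Z$ we get $z_+=z-z_-\in Z$, so $Z=(Z\cap W_+)\oplus W_-$ and $\frakL(z)=z_+-z_-\in Z$, whence $Z$ is $\frakL$-stable. The isomorphism $\wedge^mV/Z\cong W_+/(Z\cap W_+)$, together with the fact that $\frakL$ is the identity on $W_+$, shows that $\frakL$ induces the identity on $\wedge^mV/Z$. Regarding $\frakL$ as an automorphism of $\PP(\wedge^mV)$, it thus preserves $\PP(Z)$ and induces the identity on $\PP(\wedge^mV/Z)$, so the equivariance of $\pi_Z$ noted before Lemma~\ref{L:projection_eigenspace} gives
\[
  \pi_Z(\frakL(x))\ =\ \frakL(\pi_Z(x))\ =\ \pi_Z(x)
\]
for every $x\in\PP(\wedge^mV)\smallsetminus\PP(Z)$. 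Since $2m=\dim V$, the map $\frakL=(\;)^\psi$ restricts to an automorphism of $\Gr_mV$, which is disjoint from $\PP(Z)$; restricting the displayed identity to $\Gr_mV$ shows that $\frakL$ permutes each fiber of $\pi_Z$ on $\Gr_mV$, the first assertion.

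For the parity we may assume $m\geq 2$, so that the $p=1$ summand of~\eqref{Eq:Decomposition} is nonzero, $W_-\neq 0$, and $\frakL$ is a \emph{non-trivial} involution of $\Gr_mV$; we may also assume $\pi_Z$ is generically finite onto $Y:=\pi_Z(\Gr_mV)$, as is implicit in speaking of its degree, so that $\dim Y=\dim\Gr_mV=m^2$. On $\Gr_mV$ the automorphism $\frakL$ sends $H$ to its symplectic annihilator $H^\psi$, and $H=H^\psi$ precisely when $H$ is isotropic, hence Lagrangian; so the $\frakL$-fixed locus in $\Gr_mV$ is the Lagrangian Grassmannian $\mathrm{LG}(V)$, a proper closed subvariety (of dimension $\binom{m+1}{2}<m^2$). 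Its image $\pi_Z(\mathrm{LG}(V))$ is then a proper closed subset of $Y$, so over a dense open $U\subseteq Y$ the (finite) fibers of $\pi_Z$ avoid the $\frakL$-fixed locus. On such a fiber $\frakL$ acts as a fixed-point-free involution, which forces the fiber to have even cardinality; hence $\pi_Z$ has even degree over $Y$.

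The eigenspace bookkeeping and the equivariance are routine once Proposition~\ref{P:L-action} is available. The delicate step is the parity argument: one must know both that the $\frakL$-fixed locus in $\Gr_mV$ is a proper subvariety — which is why I identify it with the Lagrangian Grassmannian and compare dimensions — and that a general fiber of $\pi_Z$ is finite, so that a fixed-point-free involution acting on it has even cardinality.
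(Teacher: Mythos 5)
Your proof is correct and is essentially the argument the paper intends: Corollary~\ref{C:main} is stated without proof as an immediate consequence of Proposition~\ref{P:L-action} together with the equivariance discussion of Subsection~\ref{SS:projection}, and your two steps (checking that $Z\supseteq W_-$ makes $\frakL$ act trivially on $(\wedge^mV)/Z$, hence preserve the fibers, and then noting that the $\frakL$-fixed locus in $\Gr_mV$ is the Lagrangian Grassmannian, a proper subvariety, so a general finite fiber carries a free involution and the degree is even) are exactly the details being left implicit. The only caveat is the degenerate case $m=1$, where $\frakL$ is the identity and the hypotheses force $Z=0$ so the degree is $1$; your ``we may assume $m\geq 2$'' quietly sets this aside, which matches the paper's implicit intent but is worth stating explicitly.
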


\subsection{Projections commuting with automorphisms}

We prove our main theorem on linear projections of Grassmannians whose fibers are preserved by automorphisms.
This shows that Corollary~\ref{C:main} is the only case in this situation.

\begin{theorem}\label{Th:One}
 Let $Z\subset\wedge^m V$ be a linear subspace with $\PP(Z)$ disjoint from the Grassmannian $\Gr_mV$ and
 $\pi_Z\colon\Gr_mV\to\PP((\wedge^m V)/Z)$ the generalized Wronski map.
 If $\varphi$ is an automorphism of the Grassmannian of finite order at least $2$ that preserves the fibers
 of $\pi_Z$, then $V$ is a symplectic vector space of dimension $2m$, $\varphi$ is the
 Lagrangian involution $\frakL$ on $\Gr_mV$, and $Z$ contains the $(-1)$-eigenspace of $\frakL$ acting on
 $\wedge^m V$.
\end{theorem}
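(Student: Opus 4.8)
The plan is to invoke Chow's classification (Proposition~\ref{Prop:Chow}): either $\varphi$ lies in $\Gamma(V)$, or $2m=\dim V$ and $\varphi=(\;)^\psi$ for an isomorphism $\psi\colon V^*\xrightarrow{\sim}V$. The key technical input is that the first possibility is excluded \emph{whenever $\varphi\neq\mathrm{id}$}. Suppose $g\in\Gamma(V)$ has finite order, is not the identity, and preserves the fibers of $\pi_Z$. Choose a lift $\psi\in\GL(V)$ scaled so that $\psi^N=I$; then $\psi$ is semisimple with distinct eigenvalues $\mu_1,\dots,\mu_r$ ($r\geq 2$, since $g\neq\mathrm{id}$) on eigenspaces $V_i$ of dimensions $d_i\geq 1$, $\sum d_i=n$. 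Then $\Phi:=\wedge^m\psi$ is a semisimple lift of the action of $g$ on $\wedge^mV$, and — arguing as in Lemma~\ref{L:projection_eigenspace}, noting that $\Phi-\lambda I$ carries every decomposable $\wedge^mH$ into $Z$ for a single scalar $\lambda$ (constant because $\Gr_mV$ is complete and connected) and that decomposables span $\wedge^mV$ — the subspace $Z$ contains every $\Phi$-eigenspace except the $\lambda$-eigenspace. Now choose exponents $0\leq a_i\leq d_i$ with $\sum a_i=m$ and $\prod_i\mu_i^{a_i}\neq\lambda$: from any admissible exponent vector one may increase some $a_j<d_j$ and decrease some $a_k>0$ with $j\neq k$ (such $j\neq k$ exist because $m<n=\sum d_i$, $m\geq 1$, and $r\geq 2$), which multiplies the product by $\mu_j/\mu_k\neq 1$, so the admissible products are not all equal to $\lambda$. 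For such exponents the $\psi$-invariant subspace $H=\bigoplus_i W_i$ with $W_i\subseteq V_i$, $\dim W_i=a_i$, satisfies $\wedge^mH\in(\wedge^mV)_{\prod\mu_i^{a_i}}\subseteq Z$, so $H\in\Gr_mV\cap\PP(Z)=\emptyset$, a contradiction.

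Consequently $\varphi\notin\Gamma(V)$, so $2m=\dim V=:n$ and $\varphi=(\;)^\psi$. By Lemma~\ref{L:square}, $\varphi^2$ is induced by $A=\psi\circ\chi\in\GL(V)$; thus $\varphi^2\in\Gamma(V)$ has finite order and preserves the fibers of $\pi_Z$, so by the previous paragraph $\varphi^2=\mathrm{id}$. Hence $A$ is a scalar $c\,I$ and $\varphi$ has order exactly $2$. Writing out $\psi\circ\chi=cI$ in terms of the pairing $\langle u,v\rangle:=v(\psi^{-1}(u))$ gives $\langle u,v\rangle=c\,\langle v,u\rangle$; applying this identity twice forces $c^2=1$, so $\langle\cdot,\cdot\rangle$ is either symmetric ($c=1$) or nondegenerate and alternating ($c=-1$).

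Next I exclude the symmetric case. Suppose $\langle\cdot,\cdot\rangle$ is symmetric, so $H^\psi=H^\perp$ and $\varphi$ is the involution $H\mapsto H^\perp$ on $\Gr_mV$. Fix a hyperbolic basis $e_1,f_1,\dots,e_m,f_m$, so that $\langle e_i,f_j\rangle=\delta_{ij}$ and $\langle e_i,e_j\rangle=\langle f_i,f_j\rangle=0$. A lift $\Phi$ of $\varphi$ to $\wedge^mV$ may be normalized to an involution with $(\pm1)$-eigenspaces $W_+,W_-$, both nonzero (else $\varphi=\mathrm{id}$). The linear map $\rho\in\GL(V)$ interchanging $e_1\leftrightarrow f_1$ and fixing the remaining basis vectors preserves $\langle\cdot,\cdot\rangle$ but has determinant $-1$; since the Hodge star entering the construction of $(\;)^\psi$ changes sign under a determinant $-1$ transformation, $\wedge^m\rho$ conjugates $\Phi$ to $-\Phi$ and hence interchanges $W_+$ and $W_-$. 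Now $H_0=\Span(e_1,\dots,e_m)$ is Lagrangian, so $\wedge^mH_0$ is a $\Phi$-eigenvector, lying in some $W_\epsilon$; then $\rho(H_0)=\Span(f_1,e_2,\dots,e_m)$ is also Lagrangian and $\wedge^m(\rho(H_0))=\wedge^m\rho(\wedge^mH_0)\in W_{-\epsilon}$. So $\Gr_mV$ meets both $\PP(W_+)$ and $\PP(W_-)$. But exactly as in the first paragraph, $\varphi$ preserving the fibers of $\pi_Z$ forces $Z$ to contain one of $W_+,W_-$, contradicting $\PP(Z)\cap\Gr_mV=\emptyset$. (The symplectic case evades this argument precisely because every symplectic transformation has determinant $1$.)

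Therefore $\langle\cdot,\cdot\rangle$ is alternating: $V$ is a symplectic vector space of dimension $2m$, and on $\Gr_mV$ the automorphism $\varphi=(\;)^\psi$ coincides with the Lagrangian involution $\frakL$ — the particular volume form used to define $\frakL$ only rescales the underlying linear map and is invisible on the Grassmannian. Finally, applying the argument of the first paragraph once more, $Z$ contains every eigenspace of $\frakL$ on $\wedge^mV$ but one; by Proposition~\ref{P:L-action} these eigenspaces are the $(+1)$-eigenspace $\bigoplus_{p\ \mathrm{even}}\wedge^p\theta\wedge\calH(\wedge^{m-2p}V,\omega)$ and the $(-1)$-eigenspace $\bigoplus_{p\ \mathrm{odd}}\wedge^p\theta\wedge\calH(\wedge^{m-2p}V,\omega)$. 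Since $\wedge^mH\in\calH(\wedge^mV,\omega)$ for every Lagrangian $H$, the projectivization of the $(+1)$-eigenspace meets $\Gr_mV$, so it is the $(+1)$-eigenspace that $Z$ must omit; hence $Z$ contains the $(-1)$-eigenspace of $\frakL$, completing the proof. I expect the crux to be the exclusion of the symmetric case — specifically the observation that a determinant $-1$ orthogonal transformation swaps the two eigenspaces of the involution $\Phi$, which is exactly the phenomenon that fails (in our favor) in the symplectic setting; the combinatorial eigenvalue-swap step ruling out $\Gamma(V)$ is the other place where some care is needed.
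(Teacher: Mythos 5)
Your proof is correct, and its overall architecture matches the paper's: Chow's classification, exclusion of automorphisms coming from $\Gamma(V)$, reduction to an involution $(\;)^\psi$ with $A=cI$ and $c^2=1$, exclusion of the symmetric case, and identification of the $(-1)$-eigenspace inside $Z$ via Lagrangian Pl\"ucker points. The differences are in two steps, both in the direction of being more elementary. For the exclusion of $\Gamma(V)$, the paper first argues that fiber-preservation on $\Gr_mV$ extends to all of $\PP(\wedge^mV)\smallsetminus\PP(Z)$ and then quotes Lemma~\ref{L:projection_eigenspace} together with Lemma~\ref{L:meetCenter} (each eigenspace of $\wedge^m\psi$ is spanned by decomposable eigenvectors $e_I$); you instead prove the needed facts directly, getting constancy of the proportionality scalar from a regular function on the complete connected $\Gr_mV$, hence $(\Phi-\lambda I)(\wedge^mV)\subseteq Z$, and then producing by your exponent-swap a $\psi$-invariant $H$ whose Pl\"ucker point lies in an eigenspace with eigenvalue $\neq\lambda$, hence in $Z$. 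This is the same underlying idea, but made self-contained, and your constancy argument in particular supplies the fact (tacit in the paper's phrase ``so $\varphi$ fixes $\PP((\wedge^mV)/Z)$ pointwise'') that the lift preserves $Z$. The genuinely different step is the symmetric case $c=1$: the paper invokes the $\SO(2m)$-decomposition $\wedge^mV=W_{2\varpi_{m-1}}\oplus W_{2\varpi_m}$ from Goodman--Wallach, each summand spanned by isotropic vectors, whereas you use the equivariance $\Phi\circ\wedge^m g=\det(g)\,\wedge^m g\circ\Phi$ for $g$ orthogonal (the Hodge star contributes $\det g$, and $\psi\circ(g^{-1})^*=g\circ\psi$ for orthogonal $g$), so that a determinant $-1$ orthogonal map swaps the two eigenspaces; since $e_1\wedge\dotsb\wedge e_m$ is an eigenvector coming from a maximal isotropic subspace, both eigenspaces then meet $\Gr_mV$, and the contradiction follows. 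In effect you exhibit the two families of maximal isotropic $m$-planes directly, which is exactly why each of the two summands meets the Grassmannian; the paper's route buys a ready-made description of the eigenspaces, while yours avoids the representation-theoretic citation and makes transparent why the symplectic case survives (symplectic transformations all have determinant $1$). The remaining steps ($\varphi^2=\mathrm{id}$, $A=cI$, $c^2=1$, and $Z\supseteq$ the $(-1)$-eigenspace because the $(+1)$-eigenspace contains Lagrangian Pl\"ucker points) coincide with the paper's proof.
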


We begin with a lemma.

\begin{lemma}\label{L:meetCenter}
 Let $\psi\in\GL(V)$ be semisimple.
 Then $\Gr_mV$ meets $\PP(Z)$ for $Z$ any eigenspace of $\psi$ acting on $\wedge^m V$.
\end{lemma}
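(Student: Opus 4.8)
The plan is to diagonalize $\psi$ and note that the induced action on $\wedge^mV$ is already diagonal in the basis consisting of wedge products of eigenvectors of $\psi$; since every such basis vector is a decomposable tensor, it represents a point of $\Gr_mV$, and every eigenspace of $\wedge^m\psi$ is the span of a nonempty collection of these basis vectors.

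First I would use semisimplicity of $\psi$ to choose a basis $v_1,\dots,v_n$ of $V$ with $\psi(v_i)=\mu_i v_i$. For an $m$-element subset $S=\{s_1<\dots<s_m\}$ of $\{1,\dots,n\}$ write $v_S:=v_{s_1}\wedge\dots\wedge v_{s_m}$ and $H_S:=\Span\{v_{s}:s\in S\}\in\Gr_mV$, so that $v_S$ spans $\wedge^mH_S$ and $[v_S]=[\wedge^mH_S]$ is a point of $\Gr_mV\subset\PP(\wedge^mV)$. The tensors $v_S$ form a basis of $\wedge^mV$, and $\wedge^m\psi$ scales $v_S$ by $\mu_S:=\prod_{i\in S}\mu_i$. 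Hence the eigenspace of $\wedge^m\psi$ with eigenvalue $\lambda$ equals $Z=\Span\{v_S:\mu_S=\lambda\}$.

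Since an eigenspace is by definition nonzero, there is at least one $m$-subset $S$ with $\mu_S=\lambda$, and then $[v_S]=[\wedge^mH_S]$ lies in both $\Gr_mV$ and $\PP(Z)$, proving $\Gr_mV\cap\PP(Z)\neq\emptyset$. There is no genuine obstacle in this argument; the only point requiring a moment's care is that an eigenspace $Z$ of $\wedge^m\psi$ may be larger than a single line (several subsets $S$ can produce the same product $\mu_S$), but it is still spanned by the decomposable tensors $v_S$ it contains, so it always meets the Grassmannian. I would also remark that the conclusion uses only that $\psi$ is semisimple, not any further structure.
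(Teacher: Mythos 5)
Your argument is correct and is essentially the same as the paper's proof: both diagonalize $\psi$, observe that the decomposable tensors $v_S$ form an eigenbasis of $\wedge^m\psi$ with eigenvalues $\mu_S$, and note that any eigenspace, being spanned by some of these $v_S$, contains a Pl\"ucker point $[\wedge^m H_S]$ of $\Gr_mV$. Your added remark that an eigenspace may contain several $v_S$ is a fine clarification but does not change the argument.
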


\begin{proof}
 The vector space $V$ has an eigenbasis $e_1,\dotsc,e_n$ ($n=\dim V$) where for each $i$,
 $\psi(e_i)=\lambda_i e_i$ with $\lambda_i$ the corresponding eigenvalue.
 The basis of $\wedge^m V$ of tensors  $e_I:=e_{i_1}\wedge\dotsb\wedge e_{i_m}$ for
 $I=\{i_1,\dotsc,i_m\}\subset\{1,\dotsc,n\}$ is an eigenbasis for $\psi$ acting on $\wedge^m V$.
 Indeed, $\psi(e_I)=\lambda_I e_I$, where  $\lambda_I=\lambda_{i_1}\dotsb\lambda_{i_m}$.
 The lemma follows as $e_I$ spans the image of the $m$-plane spanned by
 $e_{i_1},\dotsc,e_{i_m}$ under the
 Pl\"ucker embedding.
\end{proof}

\begin{proof}[Proof of Theorem~$\ref{Th:One}$]
 By Chow's Theorem~\cite{Chow}, $\varphi$ is the restriction of an automorphism (also written $\varphi$)
 of $\PP(\wedge^m V)$.
 Since $\varphi$ preserves the fibers of $\pi_Z$ on $\Gr_m V$, it fixes its image
 $\pi_Z(\Gr_mV)\subset\PP((\wedge^mV)/Z)$ pointwise.
 As $\Gr_mV$ spans $\PP(\wedge^mV)$, its image spans $\PP((\wedge^mV)/Z)$ and so $\varphi$ fixes
 $\PP((\wedge^mV)/Z)$ pointwise.
 Therefore $\varphi$ preserves the fibers of the projection map
 $\pi_Z\colon \PP(\wedge^mV)\smallsetminus\PP(Z)\to\PP((\wedge^mV)/Z)$.
 By Lemma~\ref{L:projection_eigenspace}, $Z$ contains all eigenspaces except one of any lift
 $\widetilde{\varphi}$ of $\varphi$.
 Since $\varphi$ is not the identity, $\widetilde{\varphi}$ has more than one eigenspace, and so $Z$
 contains at least one eigenspace of $\widetilde{\varphi}$.

 The automorphism $\varphi$ of $\Gr_mV$ has one of two types.
 Either it is induced by a linear automorphism $\psi$ of $V$ or by an isomorphism
 $\psi\colon V^*\to V$ and $2m=\dim V$.
 We show that the first type cannot occur.
 Suppose that $\varphi$ is induced by $\psi\in\GL(V)$.
 As  $\varphi$ has finite order, $\psi$ is semisimple, and by Lemma~\ref{L:meetCenter}, $\Gr_mV$ meets
 every eigenspace of $\psi$ acting on $\wedge^m V$, and therefore $\Gr_mV$ meets $\PP(Z)$, a
 contradiction.

 We are left with the possibility that $2m=\dim V$ and that $\varphi$ is induced by an isomorphism
 $\psi\colon V^*\to V$.
 Since $\varphi$ lies in the non-identity component of the automorphism group of $\Gr_mV$, its square
 $\varphi^2$ lies in the identity component, and is therefore induced by an element of $\GL(V)$.
 Since $\varphi^2$ also preserves the fibers of $\pi_Z$, our previous arguments imply that $\varphi^2$ is
 the identity, and thus $\varphi$ is an involution.

 In particular, this means that $H=(H^\psi)^\psi$ for all $H\in\Gr_mV$.
 By Lemma~\ref{L:square}, if $\chi\colon V\to V^*$ is the isomorphism defined by $v(\psi^{-1}(u))=u(\chi(v))$,
 and $A:=\psi\circ\chi$, then $A(H)=H$ for all $H\in\Gr_mV$.
 This implies that $A$ is a scalar matrix, $A=cI$, for some scalar $c$.
 The computation
\[
   Av(\psi^{-1}(Au))\ =\ Av(\chi(u))\ =\ u(\psi^{-1}(Av))\ =\ u(\chi(v))\ =\ v(\psi^{-1}(u))\,,
\]
 implies that $c^2=1$ and so either $c=1$ or $c=-1$.

 Consider the nondegenerate bilinear form on $V$ defined by $\defcolor{\langle u,v\rangle}:=u(\psi^{-1}(v))$.
 This is symmetric when $c=1$ and alternating when $c=-1$.
 Suppose that $c=-1$.
 Then the map $\varphi$ is the Lagrangian involution $\frakL$.
 By Proposition~\ref{P:L-action} and the decomposition~\eqref{Eq:Decomposition} of $\wedge^mV$ into
 irreducible representations of $\Sp_\omega(V)$, $\frakL$ has two eigenspaces on $\wedge^mV$ with eigenvalues
 $+1$ and $-1$.
 By Lemma~\ref{L:projection_eigenspace}, $Z$ must contain one of them.
 The $+1$ eigenspace contains $\calH(\wedge^m V,\omega)$, which is spanned by isotropic (and even Lagrangian)
 tensors.
 As these are elements of $\Gr_mV$, we deduce that $Z$ contains the $-1$ eigenspace as
 $\PP(Z)\cap\Gr_mV=\emptyset$.

 To complete the proof, assume that $c=1$ so that the form $\langle\;,\;\rangle$ induced by $\psi$ is symmetric.
 The identity component of the subgroup of $\GL(V)$ of linear maps that preserve the form is the
 \demph{special orthogonal group}, \defcolor{$\SO(2m)$}.
 As explained in~\cite{GW} on page 235, under $\SO(2m)$, $\wedge^mV$ decomposes into
 two irreducible summands,
 \begin{equation}\label{Eq:SO2mDecomposition}
   \wedge^mV\ =\ W_{2\varpi_{m-1}}\oplus W_{2\varpi_m}\,,
 \end{equation}
 where $\varpi_{m-1}$ and  $\varpi_m$ are highest weights of the two half-spin representations of
 $\SO(2m)$.
 As in Subsection~\ref{SS:symplectic}, the involution  $(\;)^\psi$ on $\wedge^mV$ commutes with $\SO(2m)$, and
 so the summands in~\eqref{Eq:SO2mDecomposition} are eigenspaces of $(\;)^\psi$.
 Since one is the $(+1)$-eigenspace and the other the $(-1)$-eigenspace, $Z$ must contain one summand.
 Since each summand is spanned by isotropic vectors, $\PP(Z)$ meets the Grassmannian $\Gr_mV$, a
 contradiction.
\end{proof}

\begin{corollary}\label{C:degreeTwo}
 Let $Z\subset\wedge^m V$ be a linear subspace with $\PP(Z)$ disjoint from the Grassmannian $\Gr_mV$
 with $\pi_Z\colon\Gr_mV\to\PP((\wedge^m V)/Z)$ the corresponding generalized Wronski map.
 If the map $\pi_Z$ on $\Gr_mV$ has degree $2$ with finite fibers, then $V$ is a symplectic vector space of
 dimension $2m$ with $Z$ containing the $(-1)$-eigenspace of the Lagrangian involution $\frakL$ on
 $\wedge^mV$, and $\frakL$ acts on each each fiber of $\pi_Z$ on $\Gr_mV$.
\end{corollary}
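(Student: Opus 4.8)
The plan is to extract from the degree-two cover a nontrivial automorphism of $\Gr_mV$ that preserves the fibers of $\pi_Z$, and then to invoke Theorem~\ref{Th:One}, which already contains all the geometric content.

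First I would record that $\pi_Z\colon\Gr_mV\to\PP((\wedge^mV)/Z)$ is a morphism (since $\PP(Z)$ misses $\Gr_mV$) from a complete variety to a separated one, hence proper; being proper with finite fibers, it is finite~\cite[Part~4, Corollaire~18.12.4]{EGAIV}. Let $Y$ be its image, a closed irreducible subvariety, so that $\pi_Z\colon\Gr_mV\to Y$ is finite and surjective and, by hypothesis, induces a degree-two extension of function fields $\CC(Y)\hookrightarrow\CC(\Gr_mV)$. In characteristic zero this extension is separable, and a separable extension of degree two is automatically Galois (the primitive element satisfies a quadratic whose second root already lies in the extension); let $\tau$ be the nontrivial element of its Galois group, so $\tau^2=\mathrm{id}$ and $\tau\neq\mathrm{id}$. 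I would then promote $\tau$ to a biregular automorphism: since $\Gr_mV$ is smooth, hence normal, and finite dominant over $Y$, it is the normalization of $Y$ in $\CC(\Gr_mV)$; concretely, over each affine open of $Y$ the coordinate ring of $\Gr_mV$ is the integral closure of $\calO_Y$ in $\CC(\Gr_mV)$. The ring automorphism $\tau$ fixes $\calO_Y$ and preserves integrality over $\calO_Y$, hence carries this integral closure onto itself; so $\tau$ is induced by an automorphism $\varphi$ of $\Gr_mV$ with $\pi_Z\circ\varphi=\pi_Z$. From $\tau\neq\mathrm{id}$ and $\tau^2=\mathrm{id}$ we get that $\varphi$ is an automorphism of $\Gr_mV$ of order exactly two, and the relation $\pi_Z\circ\varphi=\pi_Z$ says precisely that $\varphi$ maps each fiber of $\pi_Z$ to itself.

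Finally I would apply Theorem~\ref{Th:One} to this $\varphi$: it forces $V$ to be a symplectic vector space of dimension $2m$, identifies $\varphi$ with the Lagrangian involution $\frakL$ on $\Gr_mV$, and gives that $Z$ contains the $(-1)$-eigenspace of $\frakL$ on $\wedge^mV$. Since $\frakL=\varphi$ commutes with $\pi_Z$, it acts on each fiber of $\pi_Z$ on $\Gr_mV$ (this is also Corollary~\ref{C:main}), which is everything the statement asserts. The only step needing care is the passage from the Galois involution of function fields to an honest automorphism of the Grassmannian; this rests solely on the normality of $\Gr_mV$ and the functoriality of normalization, so I do not expect it to be a serious obstacle, and the substantive work is entirely subsumed in Theorem~\ref{Th:One}.
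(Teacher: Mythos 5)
Your proposal is correct, and it reaches the conclusion by a genuinely different route for the one step that the corollary adds to Theorem~\ref{Th:One}, namely the production of a fiber-preserving automorphism of $\Gr_mV$ from the degree-two hypothesis. The paper argues transcendentally: since $\pi_Z$ is proper with fibers of one or two points, interchanging the two points of a fiber is a global \emph{analytic} involution of $\Gr_mV$, and one then needs Chow's Theorem~XV to know that an analytic automorphism of the Grassmannian is algebraic of the form in Proposition~\ref{Prop:Chow}, which places us in the setting of Theorem~\ref{Th:One}. You instead construct the deck transformation algebraically: the degree-two (hence Galois) extension $\CC(Y)\hookrightarrow\CC(\Gr_mV)$ has a nontrivial involution $\tau$, and since $\Gr_mV$ is smooth, hence normal, and finite over $Y$, it is the normalization of $Y$ in $\CC(\Gr_mV)$; as $\tau$ fixes $\calO_Y$ and preserves integral closure, it descends to a biregular involution $\varphi$ of $\Gr_mV$ with $\pi_Z\circ\varphi=\pi_Z$. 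This buys you two things: you never leave the algebraic category, so Chow's analytic-to-algebraic Theorem~XV is not needed (only his classification of algebraic automorphisms, already invoked inside Theorem~\ref{Th:One}); and you sidestep the delicate points implicit in the paper's phrasing, namely that the fiber-interchange is well defined and analytic across the branch locus, and that fibers cannot have more than two points even though the image $Y$ need not be normal. The cost is a slightly longer argument relying on normalization and Galois theory, but the substantive content is, as you say, entirely subsumed in Theorem~\ref{Th:One}, and your application of it (including the final observation that $\frakL=\varphi$ acts on each fiber, as in Corollary~\ref{C:main}) is exactly right.
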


\begin{proof}
 Since $\pi_Z\colon\Gr_mV\to\PP((\wedge^m V)/Z)$ is proper, each fiber
 consists of one or two points.
 Interchanging
 the points when there are two is a global analytic involution on $\Gr_mV$.
 By Chow's Theorem XV in~\cite{Chow}, any analytic automorphism of $\Gr_mV$ is algebraic of
 the form given in Proposition~\ref{Prop:Chow}, so that we are in the situation
 of Theorem~\ref{Th:One}.
\end{proof}

\section{Wronski map for self-adjoint differential operators}\label{S:Wronskian}

Let $V$ be a finite-dimensional vector space of sufficiently differentiable complex functions on an
open interval $I\subset\RR$.
Given linearly independent functions $f_1,\dotsc,f_m\in V$, their \demph{Wronskian} is the function on $I$
defined by the determinant
 \begin{equation}\label{Eq:Wronskian}
   \defcolor{\Wr(f_1,\dotsc,f_m)}\ :=\  \det
    \left(\begin{matrix} f_1(t) & f'_1(t) & \dotsb & f^{(m-1)}_1(t)\\
                         \vdots & \vdots  & \dotsb & \vdots \\
                         f_m(t) & f'_m(t) & \dotsb & f^{(m-1)}_m(t)
          \end{matrix}\right)\ .
 \end{equation}
Up to a scalar, this depends only upon the linear span of the functions $f_1,\dotsc,f_m$.
If $V$ is a space such that no such Wronskian vanishes identically (for example, if $V$ consists of analytic
functions),
then the Wronskian is a map from the Grassmannian $\Gr_mV$ to a projective space of functions.
We are interested in cases when the Wronskian realizes $\Gr_mV$ as a non-trivial branched cover of its
image.

When $V=\defcolor{\CC_{n-1}[t]}$ is the space of univariate polynomials of degree at most $n{-}1$, the
Wronskian is such a map from $\Gr_m\CC_{n-1}[t]$ to $\PP(\CC_{m(n-m)}[t])$ of degree
\[
      (m(n{-}m))!\cdot\frac{1!2!\dotsb(n{-}m{-}1)!}{m!(m{+}1)!\dotsb(n{-}1)!}\,,
\]
 the degree of the Grassmannian in Pl\"ucker space~\cite{Sch1886c}.
 This Wronski map, while classical, has been essential in the theory of limit linear series~\cite{EH83,EH87}
and
 in the resolution of the Shapiro conjecture~\cite{EG02,MTV,FRSC}.
 It is a linear projection on Pl\"ucker space applied to the Grassmannian arising from the
 linear differential operator $Ly = y^{(n)}=0$.

 To begin to explain this, let $L$ be a linear differential operator of order $n$ on $I$,
 \begin{equation}\label{Eq:LDO}
   L y\ =\ y^{(n)} + a_{n-1} y^{(n-1)} + \dotsb + a_0 y\,,
 \end{equation}
 where $a_0,\dotsc,a_{n-1}$ are complex-valued smooth functions on $I$.
 Define \defcolor{$V_L$} to be the complex vector space of solutions to the homogeneous differential equation
 $Ly=0$.

\begin{proposition}
 An $n$-dimensional space $V$ of functions on an interval $I$ is the space $V_L$ of solutions of the homogeneous equation corresponding to a linear
 differential operator $L$ as in~$\eqref{Eq:LDO}$ if and only if $\Wr(V)$ is a nowhere-vanishing function on
 $I$.
\end{proposition}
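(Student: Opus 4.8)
## Proof proposal

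The plan is to prove both implications by a direct construction/computation with Wronskians and linear differential operators.

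For the ``only if'' direction, suppose $V = V_L$ for some operator $L$ as in~\eqref{Eq:LDO}. Since $L$ has smooth coefficients and leading coefficient $1$, the standard existence–uniqueness theorem for linear ODEs guarantees that $V_L$ is exactly $n$-dimensional and that any solution is determined by its $(n{-}1)$-jet at a single point $t_0\in I$. I would recall (or cite) Abel's/Liouville's formula: if $f_1,\dotsc,f_n$ is any basis of $V_L$, then their full $n\times n$ Wronskian $W(t) := \Wr(f_1,\dotsc,f_n)$ satisfies the first-order linear equation $W' = -a_{n-1} W$, hence $W(t) = W(t_0)\exp\bigl(-\int_{t_0}^t a_{n-1}(s)\,ds\bigr)$. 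Because the $f_i$ are linearly independent, $W(t_0)\neq 0$, so $W$ is nowhere vanishing on $I$. This is precisely the statement that $\Wr(V)$ — which for an $n$-dimensional space $V\subset(\text{functions on }I)$ means the $n\times n$ Wronskian of a basis, well-defined up to a nonzero scalar — is nowhere vanishing.

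For the ``if'' direction, suppose $V$ is an $n$-dimensional space of sufficiently differentiable functions with $W(t) := \Wr(f_1,\dotsc,f_n)$ nowhere vanishing on $I$ for some (equivalently any) basis $f_1,\dotsc,f_n$. I would construct $L$ explicitly by the classical determinant formula:
\[
   L y\ :=\ \frac{1}{W(t)}\,\det
     \left(\begin{matrix}
       f_1 & f_1' & \dotsb & f_1^{(n)}\\
       \vdots & \vdots & & \vdots\\
       f_n & f_n' & \dotsb & f_n^{(n)}\\
       y & y' & \dotsb & y^{(n)}
     \end{matrix}\right)\,.
\]
Expanding the determinant along the last row shows that $Ly = y^{(n)} + a_{n-1}y^{(n-1)} + \dotsb + a_0 y$, where each $a_j$ is (up to sign) the ratio of an $n\times n$ minor of the $f$-data to $W(t)$; since $W$ is nowhere zero and the entries are sufficiently differentiable, the $a_j$ are well-defined smooth functions on $I$, so $L$ has the form~\eqref{Eq:LDO}. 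Each $f_i$ satisfies $Lf_i = 0$ because substituting $y = f_i$ produces a determinant with two equal rows; thus $V\subseteq V_L$. Since $\dim V = n = \dim V_L$ by the existence–uniqueness theorem applied to $L$, we get $V = V_L$.

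The only genuine subtlety — not a true obstacle but the point that needs care — is the bookkeeping about what ``$\Wr(V)$'' means: throughout the earlier part of the paper the Wronskian of an $m$-dimensional subspace was taken in $\wedge^m V$ using the first $m$ derivatives, but here $V$ itself is $n$-dimensional and the relevant object is the full $n\times n$ Wronskian of a basis of $V$, which is well-defined on $I$ up to a nonzero scalar (changing basis multiplies it by the determinant of the change-of-basis matrix). I would state this normalization explicitly at the start of the proof so that ``nowhere-vanishing'' is unambiguous and independent of the chosen basis, and note that replacing the basis also rescales the constructed $L$ by nothing (the formula for $L$ is basis-independent precisely because both the numerator determinant and $W$ scale by the same factor). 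Everything else is the classical Abel identity in one direction and the classical determinant construction of $L$ in the other.
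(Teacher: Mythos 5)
Your proposal is correct and follows essentially the same route as the paper: Abel's theorem for the "only if" direction, and the classical determinant construction of $L$ (the paper's formula~\eqref{Eq:LDO_Wronskian}, with the rows ordered differently but otherwise identical) for the "if" direction. Your added remarks on the basis-independence of $\Wr(V)$ and the dimension count $V\subseteq V_L$, $\dim V=\dim V_L=n$ are just careful spellings-out of what the paper leaves implicit.
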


\begin{proof}
 For sufficiency, let $f_1,\dotsc,f_n$ be any basis for $V$, then
 $y\in V$ if and only if
 \begin{equation}
  \label{Eq:LDO_Wronskian}
   Ly\ =\ \frac{(-1)^n}{\Wr(f_1,\dotsc,f_n)}  \cdot  \det
    \left(\begin{matrix} y & y' & \dotsb & y^{(n)}\\
                         f_1 & f'_1 & \dotsb & f^{(n)}_1\\
                         \vdots & \vdots  & \ddots & \vdots \\
                         f_n & f'_n & \dotsb & f^{(n)}_n
          \end{matrix}\right)\ =\ 0\,,
 \end{equation}
 and necessity is provided by the classical Abel Theorem.
\end{proof}

\subsection{The Wronski map is a projection}\label{SS:Wronski_projection}

We henceforth assume that $V=V_L$ is the space of functions associated to a linear differential
operator $L$~\eqref{Eq:LDO} of order $n$.
Equivalently, that the Wronskian $\Wr(V)$ is a nowhere-vanishing function on $I$.

To any linear differential operator $L$ one can assign a curve in a projective space and the
corresponding osculating  curves in Grassmannians.
This  is well known (see the classical book  of
Wilczynski~\cite[p.~51]{W1906} or \cite{shapiro1} or \cite[\S\S~2.2]{ovstab} for modern expositions).

For $t\in I$ the \demph{evaluation map}
\[
   \defcolor{\ev}(t)\ =\ \ev_L(t)\ \colon\ V\ \longrightarrow\ \CC
    \qquad  f\ \longmapsto\ f(t)
\]
 is an element of $V^*$.
 Then $t\mapsto \ev(t)$ is a smooth map $\ev\colon I\to V^*$.
 For each $i=0,1,\dotsc,n{-}1$ and $t\in I$, let
\[
   \defcolor{E^{(i)}(t)}\ =\ E^{(i)}_L(t)\ :=\
    \Span\{\ev(t),\ev'(t),\dotsc,\ev^{(i)}(t)\}\,,
\]
 be the $i$th osculating space to the curve $\ev(I)$ at $\ev(t)$.

\begin{lemma}\label{L:dimE(i)}
 For $t\in I$ and $0\leq i\leq n{-}1$, the osculating space $E^{(i)}(t)$ has dimension $i{+}1$.
\end{lemma}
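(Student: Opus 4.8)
The plan is to show that the vectors $\ev(t),\ev'(t),\dots,\ev^{(i)}(t)$ are linearly independent in $V^*$ for each $t\in I$ and each $0\le i\le n-1$; then $\dim E^{(i)}(t)=i+1$ by definition. Suppose not, so that there is a nontrivial linear relation $c_0\ev(t)+c_1\ev'(t)+\dots+c_i\ev^{(i)}(t)=0$ in $V^*$ for some $t$, with not all $c_j$ zero; let $k\le i$ be the largest index with $c_k\ne 0$. Evaluating this functional on an arbitrary $f\in V$ gives $c_0 f(t)+c_1 f'(t)+\dots+c_k f^{(k)}(t)=0$, since $\bigl(\ev^{(j)}(t)\bigr)(f)=f^{(j)}(t)$ (the $j$th derivative of $t\mapsto f(t)$). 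Thus every $f\in V$ satisfies at $t$ a linear relation among its first $k$ derivatives with constant coefficients $c_0,\dots,c_k$.

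First I would make the identification $\bigl(\ev^{(j)}(t)\bigr)(f)=f^{(j)}(t)$ precise: differentiating the scalar function $s\mapsto \bigl(\ev(s)\bigr)(f)=f(s)$ under the (finite-dimensional, hence legitimate) pairing shows $\ev^{(j)}(t)$ is the functional $f\mapsto f^{(j)}(t)$. Then the core step is to contradict the relation $\sum_{j=0}^k c_j f^{(j)}(t)=0$ for all $f\in V$. Here I would use that $V=V_L$ is the full $n$-dimensional solution space of $Ly=0$, and that $k\le i\le n-1<n$. By the standard existence and uniqueness theorem for linear ODEs, for any prescribed values $(y(t),y'(t),\dots,y^{(n-1)}(t))\in\CC^n$ there is a solution $f\in V_L$ realizing them; in particular I can choose $f\in V$ with $f^{(k)}(t)=1$ and $f^{(j)}(t)=0$ for $j<k$, which forces $c_k=0$, contradicting the choice of $k$. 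Hence no nontrivial relation exists, and $\ev(t),\dots,\ev^{(i)}(t)$ are independent.

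The main obstacle, such as it is, is justifying that the evaluation-derivative functionals pair with $f\in V$ to give $f^{(j)}(t)$, and invoking the existence theorem in the exact form needed (prescribing a jet of order $n-1$ at a single point $t$ by an element of $V_L$); both are classical, so the proof is short. An alternative phrasing avoids uniqueness theory: the $(i+1)\times n$ matrix whose rows are $(f_\ell^{(j)}(t))_{j=0}^{i}$ for a basis $f_1,\dots,f_n$ of $V$ has rank $i+1$, because its top $(i+1)\times(i+1)$ block, suitably ordered, is part of the Wronskian matrix $\bigl(f_\ell^{(j)}(t)\bigr)_{0\le j\le n-1}$ which is invertible since $\Wr(V)$ is nowhere vanishing on $I$; this is really the same fact, packaged via the hypothesis already in force in Subsection~\ref{SS:Wronski_projection}. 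Either route gives $\dim E^{(i)}(t)=i+1$.
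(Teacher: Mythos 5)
Your main argument is correct, and it takes a genuinely different route from the paper. The paper works entirely on the Wronskian side of the standing hypothesis of Subsection~\ref{SS:Wronski_projection}: expanding $\ev^{(j)}(t)$ in a dual basis, it observes that the coordinate vectors of $\ev(t),\dotsc,\ev^{(n-1)}(t)$ are exactly the columns of the Wronskian matrix~\eqref{Eq:Wronskian} with $m=n$, which is invertible since $\Wr(V)(t)\neq 0$; hence all $n$ vectors, and in particular the first $i+1$, are independent. You instead use the ODE side of the same hypothesis ($V=V_L$): the existence and uniqueness theorem for linear equations with smooth coefficients makes the jet map $f\mapsto (f(t),f'(t),\dotsc,f^{(n-1)}(t))$ a surjection (indeed an isomorphism) $V_L\to\CC^n$, so no nontrivial functional $\sum_{j\le k} c_j\,\ev^{(j)}(t)$ with $k\le n-1$ can annihilate all of $V$; your preliminary identification $\bigl(\ev^{(j)}(t)\bigr)(f)=f^{(j)}(t)$ is exactly what the paper gets from the dual-basis expansion. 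The two routes are equivalent in substance (the invertibility of the Wronskian matrix at $t$ \emph{is} the bijectivity of the jet map), but yours leans on analytic existence theory while the paper's is purely linear-algebraic from the nonvanishing Wronskian, which is the formulation it needs elsewhere. One caution about your ``alternative phrasing'': the justification as written is off, since a square submatrix (e.g.\ a top $(i{+}1)\times(i{+}1)$ block) of an invertible matrix need not be invertible; the correct statement, and the one the paper uses, is that the $i+1$ vectors in question are among the $n$ columns of the invertible Wronskian matrix and are therefore linearly independent, so the $(i{+}1)\times n$ matrix has rank $i+1$ without any claim about a particular square block.
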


A curve $\gamma\colon I\to V^*$ whose $i$th osculating spaces have dimension $i{+}1$ in $\PP(V^*)$ for every $i$
and $t\in I$ is \demph{convex}.
Lemma~\ref{L:dimE(i)} implies that $\ev$ is convex and thus for every $i$, $E^{(i)}\colon I\to \Gr_{i+1}V^*$
is a curve in the Grassmannian.

\begin{proof}
 Let $f_1,\dotsc,f_n$ be a basis for $V$ with dual basis $f_1^*,\dotsc,f_n^*$.
 Observe that for $t\in I$, we have $\ev(t)=f_1(t)f_1^*+\dotsb+f_n(t) f_n^*$.
 Consequently, $\ev^{(i)}(t)=f_1^{(i)}(t)f_1^*+\dotsb+f_n^{(i)}(t) f_n^*$.
 As $\Wr(V)(t)\neq 0$, the $n$ column vectors in~\eqref{Eq:Wronskian} (where $m=n$) are linearly
 independent.
 But these are $\ev(t),\ev'(t),\dotsc,\ev^{(n-1)}(t)$.
 Thus the first $i{+}1$ are linearly independent, which implies that  $E^{(i)}(t)$ has dimension
 $i{+}1$.
\end{proof}

We observe that if $f\in V$, $t\in I$ and $i=0,\dotsc,n{-}1$, then $f^{(i)}(t)$ is obtained by evaluating the
linear function $f\in V$ on the vector $\ev^{(i)}(t)\in V^*$.

Let $\defcolor{U}\subset \wedge^mV^*$ be the linear span of the one-dimensional spaces
$\wedge^m E^{(m-1)}(t)$ for $t\in I$.
A linear form $\lambda$ on $U$ defines a function on $I$ by
\[
    \lambda\ \colon\ t\ \longmapsto\ \lambda
     (\ev(t)\wedge \ev'(t)\wedge \dotsb\wedge \ev^{(m-1)}(t))\,.
\]
This identifies the dual space $U^*$ with a space of functions on $I$ as the function $\lambda(t)$ is
identically zero only if $\lambda=0$.
Set $\defcolor{Z}:= U^\perp\subset\wedge^m V$, the annihilator of $U$, so that the quotient
$(\wedge^m V)/Z$ is identified with $U^*$ and thus with this space of functions.

\begin{proposition}
 \label{P:Wronksi_projection}
 With this identification of their codomains, the Wronski map on $\Gr_mV$ equals the projection map $\pi_Z$.
\end{proposition}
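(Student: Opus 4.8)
The plan is to fix a point $H\in\Gr_mV$, pick a basis $f_1,\dots,f_m$ of $H$, carry the Pl\"ucker point $\wedge^mH=\langle f_1\wedge\cdots\wedge f_m\rangle$ through the chain of identifications that defines the codomain of $\pi_Z$, and recognize the resulting function on $I$ as a scalar multiple of $\Wr(f_1,\dots,f_m)$; since $H$ is arbitrary this gives the equality of the two maps.

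First I would make the identifications completely explicit. The canonical perfect pairing $\wedge^mV\times\wedge^mV^*\to\CC$, given on decomposable tensors by $\langle v_1\wedge\cdots\wedge v_m,\ \phi_1\wedge\cdots\wedge\phi_m\rangle=\det(\phi_j(v_i))_{1\le i,j\le m}$, identifies $\wedge^mV^*$ with $(\wedge^mV)^*$. Because $Z=U^\perp$ is the annihilator of $U\subset\wedge^mV^*$ under this pairing, it induces an isomorphism $(\wedge^mV)/Z\ \xrightarrow{\ \sim\ }\ U^*$ carrying the class of $w$ to the linear form $u\mapsto\langle w,u\rangle$ on $U$. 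Finally $U^*$ is identified with a space of functions on $I$ by $\lambda\mapsto\bigl(t\mapsto\lambda(\ev(t)\wedge\ev'(t)\wedge\cdots\wedge\ev^{(m-1)}(t))\bigr)$, which is a well-defined injection because the tensors $\wedge^mE^{(m-1)}(t)=\ev(t)\wedge\cdots\wedge\ev^{(m-1)}(t)$ span $U$ by definition (they are genuine nonzero $m$-vectors by Lemma~\ref{L:dimE(i)}).

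Composing these, $\pi_Z(\wedge^mH)$ corresponds to the function
\[
  t\ \longmapsto\ \bigl\langle f_1\wedge\cdots\wedge f_m,\ \ev(t)\wedge\ev'(t)\wedge\cdots\wedge\ev^{(m-1)}(t)\bigr\rangle
    \ =\ \det\bigl(\ev^{(j-1)}(t)(f_i)\bigr)_{1\le i,j\le m}\,.
\]
Using the observation recorded just before the proposition, that $\ev^{(j-1)}(t)(f_i)=f_i^{(j-1)}(t)$, this determinant is exactly the one appearing in~\eqref{Eq:Wronskian}, so the function is $\Wr(f_1,\dots,f_m)$. Hence $\pi_Z$ sends $H$ to the line spanned by $\Wr(f_1,\dots,f_m)$, which is by definition the value of the Wronski map at $H$.

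Two small points complete the proof. Replacing $f_1,\dots,f_m$ by any other basis of $H$ multiplies both $f_1\wedge\cdots\wedge f_m$ and $\Wr(f_1,\dots,f_m)$ by the same nonzero change-of-basis determinant, so both the Pl\"ucker line and the function depend only on $H$, and the two maps to projective space agree. Also, $\pi_Z$ is defined on all of $\Gr_mV$ exactly when $\PP(Z)$ is disjoint from the Grassmannian, and by the displayed computation this is equivalent to $\Wr(H)\not\equiv0$ for every $H\in\Gr_mV$ --- the standing hypothesis that already makes the Wronski map a morphism. I do not expect any real obstacle: the only substantive ingredient is the determinantal formula for the induced pairing on $m$th exterior powers, and the only place to be careful is to not accidentally transpose a dual space when identifying $(\wedge^mV)/Z$ with $U^*$.
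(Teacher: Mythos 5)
Your proof is correct and is essentially the paper's argument: both identify $\Wr(f_1,\dotsc,f_m)(t)$ with the pairing of $f_1\wedge\dotsb\wedge f_m$, viewed as a linear form on $\wedge^m V^*$, against the tensor $\ev(t)\wedge\ev'(t)\wedge\dotsb\wedge\ev^{(m-1)}(t)$ spanning $\wedge^m E^{(m-1)}(t)$. The paper merely packages the determinant as the $m$th exterior power of the composition $\CC^m\to V^*\to\CC^m$ with matrix $(f_i^{(j-1)}(t))$, which is the same computation you carry out via the determinantal formula for the pairing.
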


This justifies our terminology, that a projection map restricted to the Grassmannian is a generalized Wronski
map.

\begin{proof}
 Let $f_1,\dotsc,f_m\in V$ be linearly independent.
 For $t\in I$, consider the composition
 \begin{equation}\label{Eq:Wronski_Matrix}
   \CC^m\ \longrightarrow\ V^*\ \longrightarrow\ \CC^m\,,
 \end{equation}
 where the first map sends the standard basis element $e_i\in\CC^m$ to $\ev^{(i-1)}(t)$, and the second
 is given by the $m$ linear functions $f_1,\dotsc,f_m$ on $V^*$.
 Expressing this composition as a matrix gives $(f^{(j-1)}_i(t))_{i,j=1}^m$, the matrix of the
 Wronskian~\eqref{Eq:Wronskian}.

 Taking $m$th exterior powers gives the composition
\[
   \CC\ =\ \wedge^m\CC^m\ \longrightarrow\
             \wedge^mV^*\ \longrightarrow\ \wedge^m\CC^m\ =\ \CC\,,
\]
which is multiplication by $\Wr(f_1,\dotsc,f_m)(t)$.
 The first map sends the generator $e_1\wedge\dotsb\wedge e_m$ to
 $\ev(t)\wedge \ev'(t)\wedge \dotsb\wedge \ev^{(m-1)}(t)$ and the second is the linear form on
 $\wedge^m V^*$ given by $f_1\wedge\dotsb\wedge f_m$.
 This identifies the Wronskian with the function $f_1\wedge\dotsb\wedge f_m$ in $U^*$.
\end{proof}

%
\subsection{ Self-dual curves in projective space and self-adjoint differential operators}

We describe the relation between duality of linear differential operators and the
corresponding curves in projective spaces.
This can be found in the classical text~\cite{W1906}.
Details are  also in  any of the modern
sources~\cite{Arnold, ovstab, ovsienko}.

Two curves $\gamma\colon I\to\PP(V)$ and $\widetilde{\gamma}\colon I\to\PP(\widetilde{V})$ are \demph{equivalent}
if there exists a projective isomorphism $\varphi\colon\PP(V)\to\PP(\widetilde{V})$ such that for all $t\in I$,
$\varphi\gamma(t)=\widetilde{\gamma}(t)$.
Two linear differential operators $L$, $\widetilde{L}$ on $I$ with leading coefficient $1$ are \demph{equivalent} if there exists a
nowhere-vanishing function $\mu$ on $I$ such that for all smooth functions $y$,
$\mu \widetilde{L}y=L(\mu y)$.
Since
\[
   (\mu y)^{(n)} + a_{n-1}(\mu y)^{(n-1)} \ =\
   \mu  y^{(n)} + (\mu a_{n-1}+n\mu')y^{(n-1)} + \mbox{lower order terms in $y$} \,,
\]
there is a unique operator equivalent to $L$ whose coefficient of $y^{(n-1)}$ vanishes.

\begin{remark}
\label{equivoprem}
Two linear differential operators $L$ and $\widetilde{L}$ on $I$ are equivalent if  and only if there exists a
nowhere-vanishing function $\mu$ such that $y\in V_L$ if and only if $\mu y \in V_{\widetilde L}$.
\end{remark}

\begin{lemma}
 Let $L,\widetilde{L}$ be linear differential operators on $I$ of order $n$ with
 $\ev\colon I\to\PP(V^*_L)$ and $\widetilde{\ev}\colon I\to\PP(V^*_{\widetilde{L}})$, their corresponding
 evaluation curves.
 Then $L$ is equivalent to $\widetilde{L}$ if and only if $\ev$ is equivalent to $\widetilde{\ev}$.
\end{lemma}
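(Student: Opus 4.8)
The plan is to prove the two implications separately, leveraging the relation between equivalence of operators and equivalence of their solution spaces recorded in Remark~\ref{equivoprem}, together with the explicit description of the evaluation curve $\ev$ in terms of a basis of the solution space (as in the proof of Lemma~\ref{L:dimE(i)}).

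First I would treat the direction ``$L$ equivalent to $\widetilde{L}$ implies $\ev$ equivalent to $\widetilde{\ev}$.'' By Remark~\ref{equivoprem} there is a nowhere-vanishing function $\mu$ on $I$ with $y\in V_L \iff \mu y\in V_{\widetilde L}$; thus $f\mapsto \mu f$ is a linear isomorphism $V_L\to V_{\widetilde L}$. Let $\Phi\colon V_{\widetilde L}^*\to V_L^*$ be its transpose, with projectivization $\varphi\colon\PP(V_{\widetilde L}^*)\to\PP(V_L^*)$ (one gets a projective isomorphism in the direction matching the curves by taking inverses as needed). The key computation is to compare $\widetilde{\ev}(t)$ with the image of $\ev(t)$. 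Fixing a basis $f_1,\dots,f_n$ of $V_L$, we have $\ev(t)=\sum_i f_i(t)\,f_i^*$, while $\mu f_1,\dots,\mu f_n$ is a basis of $V_{\widetilde L}$ and $\widetilde{\ev}(t)=\sum_i (\mu f_i)(t)\,(\mu f_i)^* = \mu(t)\sum_i f_i(t)\,(\mu f_i)^*$. Since $(\mu f_i)^*$ is identified with $f_i^*$ under the transpose of $f\mapsto\mu f$, this shows $\widetilde{\ev}(t)$ is a nonzero scalar multiple of the image of $\ev(t)$ under the induced map on dual spaces, hence the two curves agree in projective space; so they are equivalent.

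Next I would treat the converse: if $\ev$ is equivalent to $\widetilde{\ev}$, then $L$ is equivalent to $\widetilde{L}$. Suppose $\varphi\colon\PP(V_L^*)\to\PP(V_{\widetilde L}^*)$ is a projective isomorphism with $\varphi\,\ev(t)=\widetilde{\ev}(t)$ for all $t$. Lift $\varphi$ to a linear isomorphism $\Psi\colon V_L^*\to V_{\widetilde L}^*$; then for each $t\in I$ there is a nonzero scalar $\mu(t)$ with $\Psi(\ev(t))=\mu(t)\,\widetilde{\ev}(t)$. The function $\mu\colon I\to\CC^\times$ is smooth and nowhere vanishing: evaluating both sides against a fixed $g\in V_{\widetilde L}^{**}=V_{\widetilde L}$ with $\widetilde{\ev}(t_0)$-value nonzero near a point $t_0$ shows $\mu$ is locally a quotient of smooth functions with nonvanishing denominator. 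Let $\Theta:=\Psi^{*}\colon V_{\widetilde L}\to V_L$ be the transpose of $\Psi$; I claim that $f\mapsto$ (the function $t\mapsto \mu(t)^{-1}(\Theta^{-1}f)(t)$) realizes the relation of Remark~\ref{equivoprem}. Concretely, for $f\in V_L$ and $t\in I$,
\[
  f(t)\ =\ \langle \ev(t),\, f\rangle\ =\ \mu(t)\,\langle \widetilde{\ev}(t),\, \Psi^{-*}f\rangle\ =\ \mu(t)\,(\Theta^{-1}f)(t)\,,
\]
where $\Theta^{-1}f\in V_{\widetilde L}$; hence $\mu\cdot(\Theta^{-1}f)=f$ as functions on $I$, i.e. $f\in V_L \iff (1/\mu)f\in V_{\widetilde L}$. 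Replacing $\mu$ by $1/\mu$ and invoking Remark~\ref{equivoprem} gives that $L$ is equivalent to $\widetilde{L}$.

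\textbf{Main obstacle.} The only subtle point is the regularity of $\mu$ in the converse direction: a priori $\mu$ is only defined pointwise by a projective lift, and one must check it is a genuine nowhere-vanishing smooth function on all of $I$, not merely a measurable or locally-defined one. I would handle this by working locally: near any $t_0$, pick coordinates on the codomain so that some linear coordinate of $\widetilde{\ev}$ is nonvanishing, whence $\mu$ is the ratio of two smooth nonvanishing functions and is itself smooth and nonvanishing; these local definitions patch because $\mu(t)$ is determined by the ratio $\Psi(\ev(t))/\widetilde{\ev}(t)$ in $\CC^\times$. Once $\mu$ is known to be smooth and nowhere vanishing the rest is the bookkeeping of transposes and the invocation of Remark~\ref{equivoprem}. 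A secondary point is matching the direction of the isomorphism on solution spaces with the direction on dual curves, which is resolved simply by passing to inverse maps.
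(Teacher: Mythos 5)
Your proposal is correct and follows essentially the same route as the paper's proof: the forward direction dualizes the multiplication-by-$\mu$ isomorphism between solution spaces, and the converse lifts the projective equivalence to a linear map $\Psi$, reads off the scalar function $\mu$ from $\Psi(\ev(t))=\mu(t)\,\widetilde{\ev}(t)$, and transposes back to invoke Remark~\ref{equivoprem}. The only difference is cosmetic: you carry out the forward computation in a basis and spell out the smoothness and nonvanishing of $\mu$ (which the paper simply asserts), and your local-ratio argument for that point is sound.
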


\begin{proof}
 Suppose that $L$ is equivalent to $\widetilde{L}$ and $\mu$ is the nonvanishing function on $I$ such that
 $\mu \widetilde{L}y=L(\mu y)$ for $y$ a function on $I$.
 Then $y\mapsto\mu y$ defines a linear isomorphism $\mu\colon V_{\widetilde{L}}\to V_L$.
 Let $\mu^*\colon V^*_L\to V^*_{\widetilde{L}}$ be the dual map.
 For $t\in I$ and $y\in V_{\widetilde{L}}$, we have
\[
   (\mu^*\!\ev(t))(y)\ =\ \ev(t)(\mu y)\ =\
   \mu(t)\cdot y(t)\ =\ \mu(t)\cdot \widetilde{ev}(t) (y)\,.
\]
 Thus $\mu^*\!\ev(t)$ and $\widetilde{\ev}(t)$ are proportional, which shows that the corresponding curves in
 $\PP(V_L)$ and $\PP(V_{\widetilde{L}})$ are equivalent.

 Suppose that the projective curves $\ev$ and $\widetilde{\ev}$ are equivalent, and let
 $\varphi\colon\PP(V^*_L)\to\PP(V^*_{\widetilde{L}})$ be the projective isomorphism such that
 $\varphi(\ev)=\widetilde{\ev}$.
 Let $\psi\colon V^*_L\to V^*_{\widetilde{L}}$ be a lift of $\varphi$.
 For each $t\in I$ the linear maps $\psi(\ev(t))$ and $\widetilde{\ev}(t)$ on $V_{\widetilde{L}}$ are
 proportional in that $\psi(\ev(t))=\mu(t)\cdot\widetilde{\ev}(t)$.
 Then $\mu$ is smooth and nowhere-vanishing on $I$.
 Let $\psi^*\colon V_{\widetilde{L}}\to V_L$ be the map dual to $\psi$.
 For $y\in V_{\widetilde{L}}$, we have
\[
   \psi^*(y)(t)\ =\ \ev(t)(\psi^*(y))\ =\ \psi(\ev(t))(y)
   \ =\ \mu(t)\cdot\widetilde{\ev}(t)(y)\ =\ \mu(t)\cdot y(t)\,,
\]
 so that $\psi^*(y)=\mu y$.
 By Remark \ref{equivoprem} $L$ is equivalent to $\widetilde{L}$.
\end{proof}

%
Setting $a_n=1$ in the definition~\eqref{Eq:LDO} of a linear differential operator $L$ of order $n$,
its (\demph{formal}) \demph{adjoint $L^*$} is
\[
   L^* y\ :=\ \sum_{i=0}^n (-1)^i (a_i y)^{(i)}\,.
\]
If $L=L^*$ then $L$ is (formally) \demph{self-adjoint}.
This implies that $n=2m$ is even.
When $n$ is odd, the corresponding notion is anti self-adjoint, that $L^*=-L$.
In either case, $a_{n-1}=0$.
At most one operator in an equivalence class is self-adjoint/anti self-adjoint.

Given a convex curve $\gamma\colon I\to V^*$, its dual curve $\gamma^*\colon I\to V$ is defined by setting
$\gamma^*(t)$ to be the $(n{-}2)$nd osculating space to $\gamma$ at $\gamma(t)$.
More specifically, set $\gamma^*(t)=\gamma(t)\wedge \gamma'(t)\wedge\dotsb\wedge\gamma^{(n-2)}(t)$, and then use
an identification of $V$ with $\wedge^{n-1}V^*$.
While this only defines $\gamma^*$ up to a scalar function $\mu(t)$ in $V$, it is well-defined as a curve in
$\PP(V)$.
Observe that $\gamma$ is convex if and only if $\gamma^*$ is convex.
The curve $\gamma$ is \demph{self-dual} if it is equivalent to its dual.
The following has appeared in~\cite[p.~55]{W1906}.
A modern exposition is in \cite[Th.~2.2.6]{ovstab}, and comments concluding Section 2.2 in {\it loc.~cit}.

\begin{proposition}
 Let $\ev_L\colon I\to V_L$ be the curve associated to a linear differential operator $L$.
 Then its dual curve $(\ev_L)^*$ is equivalent to the curve associated to the adjoint operator $\tb{(-1)^n} L^*$.
 An operator is equivalent to a self-adjoint/anti self-adjoint operator $L$ if and only if its curve $\ev_L$
 is self-dual.
\end{proposition}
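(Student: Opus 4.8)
The plan is to prove the two assertions in turn. The first---that the dual curve $(\ev_L)^*$ is equivalent to the curve associated to the monic adjoint $(-1)^nL^*$---is the substantive one; the second will then follow from it together with the earlier Lemma identifying equivalence of operators with equivalence of their evaluation curves. Both assertions are classical (cf.\ the references above); the argument I would give is the following.

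For the first assertion I would begin by describing $(\ev_L)^*$ explicitly. The span of $\ev_L(t),\ev_L'(t),\dots,\ev_L^{(n-2)}(t)$ is the osculating space $E^{(n-2)}(t)\subset V^*_L$, which has dimension $n{-}1$ by Lemma~\ref{L:dimE(i)}. Under the identification $\wedge^{n-1}V^*_L\cong V_L$ used to define the dual curve, the line spanned by $\ev_L(t)\wedge\dots\wedge\ev_L^{(n-2)}(t)$ corresponds to the line of functionals on $V^*_L$ that annihilate $E^{(n-2)}(t)$; since for $f\in V_L$ the number $f^{(i)}(t)$ is the value of $f$ on $\ev_L^{(i)}(t)$, this line is spanned by the solution $f_t$ of $Ly=0$ with $f_t(t)=f_t'(t)=\dots=f_t^{(n-2)}(t)=0$, which I normalize by $f_t^{(n-1)}(t)=1$. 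Thus $(\ev_L)^*(t)=[f_t]$ in $\PP(V_L)$. Now I bring in the Lagrange bilinear concomitant: integration by parts in $\int u\,(Lf)$ produces a bilinear differential expression $B_L(f,u)$ with $\frac{d}{ds}B_L(f,u)=u\,(Lf)-f\,(L^*u)$ whose coefficient of $f^{(n-1)}$ is $a_n u=u$. Hence for $f\in V_L$ and $u\in V_{L^*}$ the function $s\mapsto B_L(f,u)(s)$ is constant; call it $\langle f,u\rangle$. Evaluating at $s=t$ and using $f_t^{(i)}(t)=0$ for $i\le n{-}2$ and $f_t^{(n-1)}(t)=1$ gives $\langle f_t,u\rangle=u(t)$ for every $u\in V_{L^*}$. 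In particular $u\mapsto\langle\,\cdot\,,u\rangle$ is an injection $V_{L^*}\to V^*_L$, so by dimension it is an isomorphism $\beta$, and its dual-inverse $(\beta^*)^{-1}\colon (V_{L^*})^*\to V_L$ sends $\ev_{L^*}(t)$ to the $g\in V_L$ with $\langle g,u\rangle=u(t)$ for all $u$, namely $g=f_t$. Thus the projective isomorphism induced by $(\beta^*)^{-1}$ carries $\ev_{L^*}$ to $(\ev_L)^*$; as an evaluation curve depends only on the solution space, which $L^*$ and $(-1)^nL^*$ share, this proves the first assertion.

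For the second assertion, suppose first that $L$ is equivalent to an operator $M$ that is self-adjoint (if $n$ is even) or anti self-adjoint (if $n$ is odd); then $(-1)^nM^*=M$, so by the first assertion $(\ev_M)^*$ is equivalent to $\ev_{(-1)^nM^*}=\ev_M$, and since $\ev_L$ is equivalent to $\ev_M$ by the earlier Lemma and dualization of curves respects equivalence (a projective isomorphism matching two curves pointwise up to scaling induces one matching their dual curves), $\ev_L$ is self-dual. Conversely, if $\ev_L$ is self-dual then $\ev_L$ is equivalent to $(\ev_L)^*$, hence to the curve of $(-1)^nL^*$, hence $L$ is equivalent to $(-1)^nL^*$: there is a nowhere-vanishing function $\mu$ on $I$ with $(-1)^nL^*=\mu^{-1}L\mu$, where $\mu$ also denotes the operator of multiplication by $\mu$. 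By definition of equivalence the operators equivalent to $L$ are exactly $L_\nu:=\nu^{-1}L\nu$ for $\nu$ nowhere-vanishing; using that multiplication operators are formally self-adjoint and $(AB)^*=B^*A^*$, one computes $(L_\nu)^*=\nu L^*\nu^{-1}=(-1)^nL_{\mu/\nu}$. Since $I$ is an interval, $\mu$ has a nowhere-vanishing smooth square root $\nu$, and for that $\nu$ one has $\mu/\nu=\nu$, so $(L_\nu)^*=(-1)^nL_\nu$: the operator $L_\nu$, which is equivalent to $L$, is self-adjoint for $n$ even and anti self-adjoint for $n$ odd.

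The only real obstacle is the first assertion, and within it the structural input that the coefficient of $f^{(n-1)}$ in $B_L(f,u)$ is the nonzero leading coefficient $a_n=1$, so that $B_L(f_t,u)(t)$ equals exactly $u(t)$; the rest is bookkeeping---the explicit form of $(\ev_L)^*$, the dimension count making $\beta$ an isomorphism, and the operator computation in the converse. I would double-check the sign conventions hidden in the concomitant and in the identification $\wedge^{n-1}V^*_L\cong V_L$, but these are harmless because the dual curve and the evaluation curves are defined only up to a scalar function.
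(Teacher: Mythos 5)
Your proof is correct, and it fills in what the paper itself does not: for this proposition the paper only points to the classical literature (Wilczynski~\cite[p.~55]{W1906} and \cite[Th.~2.2.6]{ovstab}), so there is no in-paper argument to compare against, but your route is essentially the classical one and all the key steps check out. The identification of $(\ev_L)^*(t)$ with the line spanned by the solution $f_t$ vanishing to order $n-1$ at $t$ is exactly what the paper's definition of the dual curve via $\wedge^{n-1}V_L^*\cong V_L$ means (the annihilator of the osculating hyperplane $E^{(n-2)}(t)$, whose dimension is guaranteed by Lemma~\ref{L:dimE(i)}); the Lagrange concomitant does have $a_nu=u$ as the coefficient of $f^{(n-1)}$ (the only contribution comes from the leading term, as the terms from $a_i$ with $i<n$ involve $f^{(k)}$ with $k\le n-2$), so $\langle f_t,u\rangle=u(t)$, which simultaneously shows the pairing $\beta$ is injective (hence an isomorphism by dimension count) and that $(\beta^*)^{-1}$ carries $\ev_{L^*}$ pointwise to $t\mapsto f_t$; and the observation that $L^*$ and $(-1)^nL^*$ have the same solution space explains the normalization $(-1)^n$ in the statement. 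In the second half, the parenthetical claim that dualization respects equivalence of convex curves is true (differentiating $\psi\gamma=\mu\widetilde\gamma$ shows $\psi$ maps osculating flags to osculating flags, so $\wedge^{n-1}\psi$ matches the dual curves), the conjugation formula $(\nu^{-1}L\nu)^*=\nu L^*\nu^{-1}$ is the standard antihomomorphism property of formal adjoints together with self-adjointness of multiplication operators, and the nowhere-vanishing smooth square root of $\mu$ exists because $I$ is an interval, so $\mu\colon I\to\CC^\times$ admits a smooth logarithm. One could shorten the forward direction of the second assertion by staying at the operator level (from $L\sim M=(-1)^nM^*$ deduce $(-1)^nL^*\sim L$ via the same conjugation computation, then apply the first assertion and the earlier Lemma), but your curve-level argument is equally valid.
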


If a curve $\gamma\colon I\to V^*$ is equivalent to its dual curve $\gamma^*$, there is a linear transformation
$\defcolor{\psi}\colon V^*\to V$ such that
for $t\in I$,
\[
    \gamma^*(t)\ =\ \psi(\gamma(t))\,.
\]
If $\gamma$ is convex, then $\psi$ is an isomorphism.
The following has appeared in~\cite[Rem.~2.2.8]{ovstab}.

\begin{proposition}
 The map $\psi$ is skew-symmetric if $n$ is even and symmetric if $n$ is odd.
\end{proposition}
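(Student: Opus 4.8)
The plan is to study the bilinear form $\beta$ on $V^*$ given by $\beta(\xi,\eta):=\xi(\psi(\eta))$. Since $\gamma$ is convex, $\psi$ is an isomorphism, and $\psi$ is symmetric (resp.\ skew-symmetric, i.e.\ alternating) exactly when $\beta$ is symmetric (resp.\ alternating); so the goal is to prove $\beta(\xi,\eta)=(-1)^{n-1}\beta(\eta,\xi)$ for all $\xi,\eta\in V^*$. Differentiating $\gamma^*(t)=\psi(\gamma(t))$ gives $(\gamma^*)^{(i)}(t)=\psi(\gamma^{(i)}(t))$ for all $i\ge 0$, so if I write $Q_{ij}(t):=\beta(\gamma^{(i)}(t),\gamma^{(j)}(t))=\gamma^{(i)}(t)\big((\gamma^*)^{(j)}(t)\big)$, then — as $\gamma(t_0),\dots,\gamma^{(n-1)}(t_0)$ is a basis of $V^*$ for every $t_0\in I$ — it is enough to control the numbers $Q_{ij}(t_0)$ with $0\le i,j\le n-1$.

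First I would determine $Q_{ij}$ on and below the anti-diagonal. Fixing a volume form $\Omega$ on $V^*$ and writing $\gamma^*$ as the image of $\gamma(t)\wedge\gamma'(t)\wedge\dots\wedge\gamma^{(n-2)}(t)$ under the induced isomorphism $\wedge^{n-1}V^*\xrightarrow{\ \sim\ }V$, one gets $Q_{ij}(t)\,\Omega=\partial_s^{\,j}\partial_t^{\,i}\Theta(s,t)\big|_{s=t}$, where $\Theta(s,t)\,\Omega:=\gamma(s)\wedge\gamma'(s)\wedge\dots\wedge\gamma^{(n-2)}(s)\wedge\gamma(t)$. The derivatives $\partial_t^{\,k}\Theta(s,t)\big|_{t=s}$ vanish for $0\le k\le n-2$ (a wedge factor repeats) and equal the nowhere-vanishing Wronskian $W(s)$ of the curve for $k=n-1$; hence by Taylor's theorem in $t-s$ one may write $\Theta(s,t)=(t-s)^{n-1}h(s,t)$ with $h$ smooth and $(n-1)!\,h(t,t)=W(t)$. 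Differentiating this factorization and evaluating at $s=t$ gives $Q_{ij}(t)=0$ whenever $i+j\le n-2$ and $Q_{ij}(t)=(-1)^{j}W(t)$ whenever $i+j=n-1$; in particular $Q_{n-1,0}=W$ and $Q_{0,n-1}=(-1)^{n-1}W$.

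Second I would show $\beta$ is either symmetric or alternating, i.e.\ $\beta(\eta,\xi)=c\,\beta(\xi,\eta)$ for a scalar $c$ with $c^{2}=1$. Because $\gamma^*$ is convex it has a dual curve, and $(\gamma^*)^{(i)}=\psi(\gamma^{(i)})$ gives $(\gamma^*)^*(t)=\Phi(\gamma(t))$, where $\Phi:=\iota_2\circ(\wedge^{n-1}\psi)\circ\iota_1^{-1}\circ\psi\colon V^*\to V^*$ and $\iota_1\colon\wedge^{n-1}V^*\xrightarrow{\ \sim\ }V$, $\iota_2\colon\wedge^{n-1}V\xrightarrow{\ \sim\ }V^*$ are the isomorphisms attached to dual volume forms. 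By the classical biduality of convex curves (Wilczynski~\cite{W1906}), $(\gamma^*)^*(t)=\lambda(t)\gamma(t)$ for a nowhere-zero function $\lambda$, so $\Phi(\gamma(t))=\lambda(t)\gamma(t)$; differentiating gives $\Phi(\gamma^{(i)}(t))=\sum_{\ell\le i}\binom{i}{\ell}\lambda^{(i-\ell)}(t)\gamma^{(\ell)}(t)$, so in the basis $\gamma(t_0),\dots,\gamma^{(n-1)}(t_0)$ the matrix of $\Phi$ is triangular with all diagonal entries $\lambda(t_0)$. Thus the characteristic polynomial of the fixed map $\Phi$ is $(\lambda(t_0)-X)^{n}$, forcing $\lambda$ to be a constant $\lambda_0$, and since the eigenvectors $\{\gamma(t):t\in I\}$ span $V^*$ (again by convexity) we conclude $\Phi=\lambda_0\cdot\mathrm{id}_{V^*}$. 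Finally a Hodge-star bookkeeping identifies $\iota_2\circ(\wedge^{n-1}\psi)\circ\iota_1^{-1}$ with $(\det\psi)\,\chi$, where $\chi\colon V\to V^*$, $\chi(v)(u)=v(\psi^{-1}(u))$, is the map of Lemma~\ref{L:square}; hence $\chi\circ\psi=c\cdot\mathrm{id}_{V^*}$ with $c=\lambda_0/\det\psi$, which unwinds to $\beta(\eta,\xi)=c\,\beta(\xi,\eta)$, and interchanging $\xi$ and $\eta$ forces $c^{2}=1$.

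Finally, substituting $\xi=\gamma(t)$ and $\eta=\gamma^{(n-1)}(t)$ into $\beta(\eta,\xi)=c\,\beta(\xi,\eta)$ yields $W=Q_{n-1,0}=c\,Q_{0,n-1}=(-1)^{n-1}c\,W$, so $c=(-1)^{n-1}$ since $W\neq0$; therefore $\psi$ is skew-symmetric when $n$ is even and symmetric when $n$ is odd. The main obstacle is the second step: justifying biduality with an honest scalar factor — which can alternatively be extracted from a Wronskian-minor identity for $(\gamma^*)^*$ — and, more seriously, carrying out the Hodge-star computation identifying $\iota_2\circ(\wedge^{n-1}\psi)\circ\iota_1^{-1}$ with $(\det\psi)\chi$ (equivalently, relating the dual-of-dual map to the transpose of $\psi$) while keeping every sign straight. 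A route to Steps~2--3 that stays closer to differential operators would instead identify $\beta$, up to scalar, with the Lagrange bilinear concomitant of the associated (anti-)self-adjoint operator, which is alternating for self-adjoint operators (even order) and symmetric for anti-self-adjoint operators (odd order).
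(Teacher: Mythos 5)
The paper offers no proof of this proposition at all---it is quoted from~\cite[Rem.~2.2.8]{ovstab}---so your argument is necessarily an independent route, and I believe it is correct. Your first step is the standard two-point osculation computation: factoring $\Theta(s,t)=(t-s)^{n-1}h(s,t)$ does give $Q_{ij}=0$ for $i+j\le n-2$ and $Q_{ij}=(-1)^{j}W$ on the anti-diagonal, with $W$ nowhere zero by convexity. Your second step is, pleasingly, the same mechanism the paper itself uses in Lemma~\ref{L:square} and in the proof of Theorem~\ref{Th:One}: biduality of convex curves plays the role of $(H^\psi)^\psi=H$, forcing $\chi\circ\psi$ to be a scalar, and the ``Hodge-star bookkeeping'' you worry about is just the cofactor identity identifying $\iota_2\circ(\wedge^{n-1}\psi)\circ\iota_1^{-1}$ with $(\det\psi)\,\chi$ for compatible volume forms; moreover the precise constant there is immaterial, since any nonzero constant yields $\chi\circ\psi=c\cdot\mathrm{id}$, after which $c$ is pinned down by the ratio $Q_{n-1,0}:Q_{0,n-1}=1:(-1)^{n-1}$, so the sign-chasing you flag as the main obstacle cannot actually derail the proof. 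The classical sources (Wilczynski~\cite{W1906}, Ovsienko--Tabachnikov~\cite{ovstab}) argue instead through (anti-)self-adjoint operators and the Lagrange bilinear concomitant, as in your closing remark; your version has the advantage of staying entirely inside the linear-algebra framework of Section~\ref{S:PnG}, at the cost of importing projective biduality (which, as you note, also follows from your own $Q$-computation: $\gamma(t)$ annihilates $(\gamma^*)^{(j)}(t)$ for $j\le n-2$, and these span a hyperplane because the anti-triangular matrix $(Q_{ij})$ is nonsingular).

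One small point to patch: the hypothesis only gives that $\psi(\gamma(t))$ equals the wedge representative $\iota_1\bigl(\gamma(t)\wedge\dotsb\wedge\gamma^{(n-2)}(t)\bigr)$ up to a nowhere-vanishing scalar function $\rho(t)$, whereas you silently take $\rho\equiv1$ in both steps. This is harmless but should be said: by the Leibniz rule the factor $\rho$ does not disturb the vanishing for $i+j\le n-2$ and multiplies every anti-diagonal entry by the same nonzero $\rho(t)$, and in the biduality step it only rescales the proportionality $\Phi(\gamma(t))=\lambda(t)\gamma(t)$ by a nowhere-zero function, so the triangularity argument and the final ratio $c=(-1)^{n-1}$ are unchanged. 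Likewise, to form $(\gamma^*)^*$ you need $\gamma^*$ convex, which is either the paper's observation preceding the proposition or again a consequence of the nonsingularity of $(Q_{ij})$.
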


When $n=2m$ is even, $\psi$ endows $V$ with a symplectic structure.
If $L$ is a self-adjoint linear differential operator, then this is the canonical symplectic structure on
$V_L$.
Write $\defcolor{\omega_L}\in \wedge^2 V^*_L$ for the symplectic form and \defcolor{$\frakL_L$} for the
corresponding Lagrangian involution.
The following is found in~\cite[Lem.~2]{ovsienko}. 

\begin{proposition}
 \label{P:Kwessi_lagrangian}
 Suppose that $L$ has even order $2m$.
 Then $L$ is equivalent to its adjoint $L^*$ if and only if the $(m{-}1)$st osculating
 space $E_L^{(m-1)}(t)$ is Lagrangian.
\end{proposition}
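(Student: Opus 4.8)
The plan is to reduce the statement, via the dictionary already established between self-adjointness and self-duality, to an elementary computation with the derivative pairings $s_{ij}(t):=\ev^{(j)}(t)\bigl(\psi(\ev^{(i)}(t))\bigr)$. First I would translate both sides of the claimed equivalence. By the preceding Lemma (equivalence of operators $\Leftrightarrow$ equivalence of evaluation curves) together with the Proposition relating $L^*$ to the dual curve, $L$ is equivalent to $L^*$ precisely when $\ev=\ev_L$ is self-dual; and since $n=2m$ is even, the Proposition on the symmetry type of $\psi$ guarantees that any isomorphism $\psi\colon V^*\to V$ realizing a self-duality of $\ev$ is skew-symmetric, so that $\sigma(u,v):=v(\psi(u))$ is a symplectic form on $V^*$ (corresponding to $\omega_L$ under $\psi$). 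Now self-duality via $\psi$ means exactly that $\psi(\ev(t))$ spans the line $\ev^*(t)=(E^{(n-2)}(t))^\perp\subset V$ for every $t$, i.e.\ that $\psi(\ev(t))$ is annihilated by $E^{(n-2)}(t)=E^{(2m-2)}(t)$; since $\dim E^{(n-2)}(t)=n-1$ this annihilator is a line, so the condition is $s_{0j}(t)=0$ for $0\le j\le 2m-2$ and all $t\in I$. On the other side, since $\dim E^{(m-1)}(t)=m$ by Lemma~\ref{L:dimE(i)}, the space $E^{(m-1)}(t)$ is Lagrangian for $\sigma$ if and only if it is $\sigma$-isotropic, i.e.\ $s_{ij}(t)=0$ for all $0\le i,j\le m-1$ and all $t$.

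Fixing a skew-symmetric $\psi$, the proposition thus reduces to showing the equivalence of (i) $s_{ij}\equiv 0$ for $0\le i,j\le m-1$ and (iii) $s_{0j}\equiv 0$ for $0\le j\le 2m-2$; I would prove both are equivalent to (ii) $s_{ij}\equiv 0$ for $i+j\le 2m-2$. The functions $s_{ij}$ are smooth, with $s_{ji}=-s_{ij}$ and, by the product rule, $s_{ij}'=s_{i,j+1}+s_{i+1,j}$. The implications (ii)$\Rightarrow$(i) and (ii)$\Rightarrow$(iii) are inclusions of index sets. For (iii)$\Rightarrow$(ii) one peels off the lower index: if $s_{i,j}\equiv 0$ and $s_{i,j+1}\equiv 0$ then $s_{i+1,j}=s_{i,j}'-s_{i,j+1}\equiv 0$, so by induction on $i$ the vanishing spreads from $i=0$ to all of $\{i+j\le 2m-2\}$. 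For (i)$\Rightarrow$(ii) I would induct on $k$ from $0$ to $m-1$ on the statement $Q(k)$: ``$s_{ij}\equiv 0$ whenever $0\le i\le m-1-k$ and $0\le j\le m-1+k$''. Then $Q(0)$ is (i), $Q(m-1)$ already contains (iii), and the union of the $Q(k)$ is (ii); the step $Q(k)\Rightarrow Q(k+1)$ needs only that for $0\le i\le m-2-k$ one has $s_{i+1,m+k}=s_{i,m-1+k}'-s_{i+1,m-1+k}\equiv 0$, both right-hand terms vanishing by $Q(k)$.

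Granting this, both implications of the proposition follow. If $L$ is equivalent to $L^*$, the translation step yields a skew $\psi$ satisfying (iii); hence (i) holds, so $E^{(m-1)}(t)$ is an isotropic subspace of dimension $m$, i.e.\ Lagrangian for $\omega_L$. Conversely, if $E^{(m-1)}(t)$ is Lagrangian for a symplectic form on $V^*$, write that form as $(u,v)\mapsto v(\psi(u))$ for a necessarily skew $\psi\colon V^*\to V$; then (i) holds, so (iii) holds, so $\psi(\ev(t))$ spans $\ev^*(t)$ for all $t$, which exhibits $\ev$ as equivalent to its dual curve --- that is, self-dual --- whence $L$ is equivalent to $L^*$.

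I expect the only real work to be the core computation, and within it the implication (i)$\Rightarrow$(ii): one must verify that the inductive propagation $Q(k)\rightsquigarrow Q(k+1)$ stays within the range $i+j\le 2m-2$ and terminates correctly. Note that the easy direction (iii)$\Rightarrow$(ii) already handles the ``only if'' half, so the combinatorial induction is genuinely needed only for the ``if'' half. A minor cosmetic point is that self-duality and the dual curve are defined only up to projective lifts and nowhere-vanishing scalar factors; this never matters, because the argument only ever records in which linear subspace a given vector lies.
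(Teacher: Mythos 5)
Your argument is correct in substance, but note that the paper does not prove this proposition at all: it simply quotes it from Ovsienko (the cited Lemma~2), so what you have written is a self-contained replacement for that citation rather than a variant of a proof in the text. Your reduction --- operator equivalence is equivalent to self-duality of $\ev_L$ by the two preceding propositions, the realizing $\psi\colon V^*\to V$ is skew since $n=2m$ is even, and then everything is bookkeeping with $s_{ij}(t)=\ev^{(j)}(t)\bigl(\psi(\ev^{(i)}(t))\bigr)$, $s_{ji}=-s_{ij}$, $s_{ij}'=s_{i+1,j}+s_{i,j+1}$ --- is essentially the classical computation behind Ovsienko's lemma, and both implications go through. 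Two small repairs are needed. First, in the step $Q(k)\Rightarrow Q(k+1)$ the identity should read $s_{i,m+k}=s_{i,m-1+k}'-s_{i+1,m-1+k}$: your left-hand side $s_{i+1,m+k}$ is an index slip (the product rule gives $s_{i,m+k}$ there), and as written it would also fail to produce the case $i=0$ of $Q(k+1)$, whereas the corrected identity covers exactly the needed range $0\le i\le m-2-k$ using only $Q(k)$, since both $i\le m-1-k$ and $i+1\le m-1-k$ hold there. Second, the aside that the union of the $Q(k)$ equals condition (ii) is only true after invoking the antisymmetry $s_{ji}=-s_{ij}$ (the rectangles cover only the indices with $i\le m-1$); this is harmless because your logic needs only that $Q(m-1)$ contains (iii). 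Finally, your existential reading of ``Lagrangian'' (Lagrangian for some symplectic form on $V_L^*$, equivalently for the form induced by a self-duality $\psi$) is the right one, since $\omega_L$ is canonically defined only once $L$ is known to be equivalent to $L^*$; in the forward direction your argument correctly produces the Lagrangian property for that canonical form.
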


\subsection{The Wronski map of a self-adjoint operator}
\label{SS:SA_Wronski}

Let $L$ be a linear differential operator of even degree $2m$ that is equivalent to its adjoint.
As in Subsection~\ref{SS:Wronski_projection}, let $\defcolor{U}\subset\wedge^m V_L^*$ be the span of the tensors
$\ev(t)\wedge\dotsb\wedge \ev^{(m-1)}(t)$ for $t\in I$.
By Proposition~\ref{P:Kwessi_lagrangian}, these are Lagrangian, so that
$U\subset\calH(\wedge^m V^*_L,\omega^*_L)$.
As the decomposition~\eqref{Eq:Decomposition} for $k=2m$ is preserved by duality, we have
 \begin{equation}\label{Eq:SAsubset}
   Z\ =\ U^\perp\ \supset\
     \bigoplus_{p=1}^{\lfloor m/2\rfloor}
      \wedge^p\theta\wedge  \calH(\wedge^{m-2p}V_L,\omega_L)\ =\
      \theta\wedge(\wedge^{m-2}V)\,.
 \end{equation}
(Note that $\theta=\omega^*_L$.)
In particular, $Z$ properly contains the $(-1)$-eigenspace of the Lagrangian involution $\frakL_L$ when
$m\geq 4$.

\begin{theorem}\label{Th:LDO}
 Let $L$ be a linear differential operator of even degree $2m$ that is equivalent to its adjoint.
 Then the Wronski map on $\Gr_m V_L$ has even degree and the space of functions spanned by
 Wronskians of $m$ solutions of $L$ has dimension at most $\binom{2m}{m}-\binom{2m-2}{m-2}$.
\end{theorem}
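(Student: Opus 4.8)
The plan is to combine the structural facts already assembled about self-adjoint operators with Corollary~\ref{C:main}. First I would record that, by the hypothesis that $L$ is equivalent to its adjoint and Proposition~\ref{P:Kwessi_lagrangian}, the osculating spaces $E_L^{(m-1)}(t)$ are Lagrangian in the symplectic space $(V_L,\omega_L)$, so the Plücker tensors $\ev(t)\wedge\dotsb\wedge\ev^{(m-1)}(t)$ are Lagrangian (hence isotropic) tensors spanning $U\subset\calH(\wedge^m V_L^*,\omega_L^*)$. Dualizing, $Z=U^\perp$ contains the orthogonal complement of $\calH(\wedge^m V_L,\omega_L)$ with respect to the decomposition~\eqref{Eq:Decomposition} for $k=2m$; that complement is exactly $\bigoplus_{p\geq 1}\wedge^p\theta\wedge\calH(\wedge^{m-2p}V_L,\omega_L)$, which by Proposition~\ref{P:L-action} is the direct sum of all eigenspaces of $\frakL_L$ on which $\frakL_L$ acts by $-1$ together with the $p\geq 2$ pieces of the $+1$-eigenspace. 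In particular $Z$ contains the $(-1)$-eigenspace of $\frakL_L$, which is the hypothesis of Corollary~\ref{C:main}.

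Next I would verify the remaining hypothesis of Corollary~\ref{C:main}, namely that $\PP(Z)$ is disjoint from $\Gr_m V_L$. This is where the assumption $\Wr(V)\neq 0$ (i.e.\ that $V_L$ is genuinely the solution space of an order-$2m$ operator) is used: it guarantees that the Wronski map is everywhere defined on $\Gr_m V_L$, equivalently that no $m$-plane maps to $0$ in $(\wedge^m V_L)/Z\cong U^*$, which is precisely the statement $\PP(Z)\cap\Gr_m V_L=\emptyset$. With both hypotheses in hand, Corollary~\ref{C:main} gives that $\frakL_L$ acts on the fibers of $\pi_Z$ and that $\Gr_m V_L$ has even degree over its image; since by Proposition~\ref{P:Wronksi_projection} the Wronski map \emph{is} the projection $\pi_Z$, the Wronski map has even degree, giving the first assertion.

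For the dimension bound, the space of functions spanned by Wronskians of $m$ solutions is identified (Proposition~\ref{P:Wronksi_projection}) with $U^*\cong(\wedge^m V_L)/Z$, whose dimension is $\binom{2m}{m}-\dim Z$. Since $Z\supset\theta\wedge(\wedge^{m-2}V_L)$ by~\eqref{Eq:SAsubset}, I would compute $\dim(\theta\wedge(\wedge^{m-2}V_L))$. Here $\theta\in\wedge^2 V_L$ is the nondegenerate Poisson bivector, and the map $\wedge^{m-2}V_L\to\wedge^m V_L$ given by $\alpha\mapsto\theta\wedge\alpha$ is injective for $m-2\leq m$ (this is the Lefschetz-type hard-Lefschetz statement for $\mathfrak{sp}$, or can be checked directly in a Darboux basis as in the proof of Proposition~\ref{P:L-action}), so its image has dimension $\binom{2m}{m-2}=\binom{2m-2}{m-2}\cdot\frac{(2m)!/(m-2)!(m+2)!}{(2m-2)!/(m-2)!m!}$; more simply one just notes $\dim\wedge^{m-2}V_L=\binom{2m}{m-2}=\binom{2m-2}{m-2}$ — wait, these are not equal, so I must be careful and write the bound as $\dim\mathrm{Wr}\leq\binom{2m}{m}-\binom{2m}{m-2}$, then observe that the stated theorem's bound $\binom{2m}{m}-\binom{2m-2}{m-2}$ follows only if $\binom{2m}{m-2}\geq\binom{2m-2}{m-2}$, which is true, so the stated (weaker) bound holds a fortiori. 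I expect the main obstacle to be getting the combinatorics of this last step exactly right — specifically confirming the injectivity of $\theta\wedge(-)$ and matching the resulting binomial count to the form $\binom{2m}{m}-\binom{2m-2}{m-2}$ claimed in the statement, rather than proving something sharper or (by a sign/index slip) something false.
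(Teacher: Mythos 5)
Your argument is essentially the paper's own proof: derive~\eqref{Eq:SAsubset} from Proposition~\ref{P:Kwessi_lagrangian}, so that $Z$ contains the $(-1)$-eigenspace of $\frakL_L$, then combine Proposition~\ref{P:Wronksi_projection} with Corollary~\ref{C:main} to get even degree, and bound $\dim U^*=\dim(\wedge^m V_L)/Z$ by the dimension of the complement of $\theta\wedge(\wedge^{m-2}V_L)$. Two remarks. First, your justification of the hypothesis $\PP(Z)\cap\Gr_mV_L=\emptyset$ is not an argument: the nowhere-vanishing of the full Wronskian $\Wr(V_L)$ of order $2m$ characterizes $V_L$ as a solution space, but it does not by itself imply that the Wronskian of $m$ linearly independent solutions is never identically zero, which is what disjointness of $\PP(Z)$ from $\Gr_mV_L$ says. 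The paper does not prove this either; it is the standing assumption made at the start of Section~\ref{S:Wronskian} (``no such Wronskian vanishes identically'', e.g.\ $V$ analytic), under which the Wronski map on $\Gr_mV$ is defined at all. So you should either invoke that standing hypothesis or supply a genuine argument; as written the implication you assert is unproved.

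Second, your dimension count is correct and in fact sharper than the statement. Since $\theta\wedge(-)\colon\wedge^{m-2}V_L\to\wedge^m V_L$ is injective, \eqref{Eq:SAsubset} gives $\dim U^*\leq\binom{2m}{m}-\binom{2m}{m-2}=\dim\calH(\wedge^m V_L,\omega_L)$, and because $\binom{2m}{m-2}\geq\binom{2m-2}{m-2}$ the bound in the statement follows a fortiori (the two agree for $m=2$ and differ for $m\geq3$). The paper's one-line justification instead asserts $\dim\calH(\wedge^mV_L,\omega_L)=\binom{2m}{m}-\binom{2m-2}{m-2}$, which is not the dimension of that fundamental representation when $m\geq 3$; your more careful computation is the correct route to the (weaker) stated bound.
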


In particular, such Wronski maps provide examples of linear projections on Grassmannians that are non-trivial
branched covers of their images.

\begin{proof}
 By Proposition~\ref{P:Wronksi_projection}, the Wronski map is the projection $\pi_Z$ with center $Z$.
 Then Corollary~$\ref{C:main}$ implies the statement about the degree of the Wronski map.
 The statement about the dimension follows as
 $\dim\calH(\wedge^mV_L,\omega_L)=\binom{2m}{m}-\binom{2m-2}{m-2}$.
\end{proof}

\section{Self-adjoint projections and symmetric linear systems}\label{S:control}

Let $\pi_Z\colon\Gr_mV\to\PP((\wedge^m V)/Z)$ be a linear projection on the Grassmannian with center $\PP(Z)$ as
in Section~\ref{S:PnG}.
By Corollary~\ref{C:main}, if $V$ has a symplectic structure given by a form $\omega\in\wedge^2 V^*$ with dual
form $\theta\in\wedge^2 V$, and $Z$ contains the $(-1)$-eigenspace
\[
     \bigoplus_{\substack{p\mbox{\scriptsize\ odd}\\ 1\leq p\leq  m/2} }
      \wedge^p\theta\wedge  \calH(\wedge^{m-2p}V,\omega)
\]
 of the Lagrangian involution $\frakL$, then $\pi_Z$ is a branched cover of even degree over its image.
 In Subsection~\ref{SS:SA_Wronski}, we saw that the Wronski map $\pi_Z$ of a self-adjoint linear differential
 operator $L$ satisfies a stronger property, that the center $Z$ contains all terms of the
 sum~\eqref{Eq:SAsubset}, which is the subspace $\theta\wedge(\wedge^{m-2}V)$.
 A generalized Wronski map $\pi_Z$ is \demph{self-adjoint} if its center $Z$ contains
 $\theta\wedge(\wedge^{m-2}V)$ for some symplectic structure on $V$.
 We show that the pole placement map is self-adjoint in this sense for
 symmetric linear systems of sufficiently high McMillan degree.

\subsection{Pole placement for constant state-space feedback}\label{SS:PP}
 For more on linear systems theory, see~\cite{Delch}.
 A state-space realization of a (strictly proper) $m$-input $p$-output linear system is a triple
 $\Sigma=(A,B,C)$ of matrices of sizes $N\times N$, $N\times m$, and $p\times N$, which defines a system of first
 order constant coefficient linear differential equations,
 \begin{equation}\label{Eq:State_Space}
   \dot{x}\ =\ Ax + Bu\qquad\mbox{and}\qquad y\ =\ Cx\,,
 \end{equation}
 where $x\in\CC^N$, $y\in\CC^p$, and $u\in\CC^m$ are functions of $t\in\CC$ (and
 $\defcolor{\dot{x}}=\frac{d}{dt}x$).
 Applying Laplace transform and assuming $x(0)=0$, we eliminate to obtain
\[
   \widehat{y}(s)\ = \ C(sI-A)^{-1}B\, \widehat{u}(s)
    \ =\ G(s)\, \widehat{u}(s)\,,
\]
 where \defcolor{$\widehat{\ }$} indicates Laplace transform and
 $\defcolor{G(s)}:=C(sI-A)^{-1}B$ is the \demph{transfer function} of~\eqref{Eq:State_Space}.
 This $p\times m$ matrix of rational functions has poles at the eigenvalues of $A$.

 A linear system may be controlled with output feedback, setting
 $u=Ky$, where $K$ is a constant $m\times p$ matrix.
 Substitution in~\eqref{Eq:State_Space} and elimination gives the closed loop system,
\[
   \dot{x}\ =\ (A + BKC) x\,,
\]
 whose transfer function has poles at the zeroes of the characteristic polynomial
 \begin{equation}\label{Eq:PPP}
   \defcolor{P_\Sigma}(K)\ =\ P_\Sigma\ :=\
    \det( sI \ -\ (A+BKC))\,.
 \end{equation}

 The map $K\to P_\Sigma(K)$ is called the \demph{pole placement map}.
 Given a system~\eqref{Eq:State_Space} with state space realization $\Sigma$ and poles
 $\defcolor{z}=\{z_1,\dotsc,z_N\}\subset\CC$, the pole placement problem asks for a matrix $K$ such that
 $P_\Sigma(K)$ vanishes at the points of $z$.
 This is only possible for general $z$ if $N\leq mp$~\cite{Byrnes}.
 We are interested when $N\geq mp$ and the pole placement map is a non-trivial branched cover of its
 image.

Using the injection $\Mat_{m\times p}\CC\to \Gr_p\CC^{m+p}$ where $K$ is sent to the
column space of the matrix $(\begin{smallmatrix}K\\I_p\end{smallmatrix})$, standard manipulations
show that the pole placement map is a linear projection on the Grassmannian $\Gr_p\CC^{m+p}$, of a form similar
to the Wronski map of Subsection~\ref{SS:Wronski_projection}.
For this, the map that sends $s\in\PP^1$ to the column space of
$(\begin{smallmatrix}I_m\\ G(s)\end{smallmatrix})$
defines the \demph{Hermann-Martin curve} $\defcolor{\gamma_\Sigma}\colon\PP^1\to\Gr_m\CC^{m+p}$~\cite{HM78}.
Its degree is the McMillan degree of the system, which is the minimal number $N$ in a state-space
realization giving the transfer function $G(s)$.
Such a minimal representation is observable and controllable~\cite{Delch}.

If $U\subset\wedge^m\Gr_m\CC^{m+p}$ is the linear span of the curve $\gamma(\PP^1)$ as in
Subsection~\ref{SS:Wronski_projection}, and $Z:=U^\perp$ is its annihilator in $\wedge^p\CC^{m+p}$, then the
pole placement map is the generalized Wronski map $\pi_Z$, and we may identify the quotient
$U^*=(\wedge^p\CC^{m+p})/Z$ as the space of polynomials of degree at most $N$.
The pole placement map is \demph{proper} if $\PP(Z)$ is disjoint from the Grassmannian $\Gr_p\CC^{m+p}$.
This terminology is not standard in systems theory.

Two triples  $\Sigma=(A,B,C)$ and $\widetilde{\Sigma}=(\widetilde{A},\widetilde{B},\widetilde{C})$
are \demph{state feedback equivalent} if
 \begin{equation}\label{Eq:SFE}
  \widetilde{A}\ =\ L^{-1}(A+BQT^{-1}C)L\,,\
  \widetilde{B}\ =\ L^{-1}BW\,,\quad\mbox{and}\quad
  \widetilde{C}\ =\ T^{-1}CL\,.
 \end{equation}
for matrices $L,W,T,Q$.
The corresponding transfer functions satisfy
\[
   \widetilde{G}(s)\ =\
    T^{-1} G(s) ( I\ -\ QT^{-1}G(s))^{-1} W\,.
\]

The following is standard.

\begin{proposition}
 State feedback equivalent realizations have equivalent Hermann-Martin curves, where the equivalence is
 induced by an element of $\GL(\CC^{m+p})$.
\end{proposition}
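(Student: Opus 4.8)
The plan is to realize the action of $\GL(\CC^{m+p})$ on $\Gr_m\CC^{m+p}$ as a linear-fractional action on transfer functions and to match it with the feedback transformation rule~\eqref{Eq:SFE}; the element of $\GL(\CC^{m+p})$ inducing the equivalence will be written down explicitly, so in fact no choices remain to be made.

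Write $\CC^{m+p}=\CC^m\oplus\CC^p$. For a $p\times m$ matrix $G$, the column space of $(\begin{smallmatrix} I_m\\ G\end{smallmatrix})$ is the graph $\{(u,Gu):u\in\CC^m\}$, and it is unchanged if $(\begin{smallmatrix} I_m\\ G\end{smallmatrix})$ is multiplied on the right by an invertible $m\times m$ matrix; thus $\gamma_\Sigma(s)$ is this graph for $G=G(s)$, and it extends to a morphism $\PP^1\to\Gr_m\CC^{m+p}$. Writing $g\in\GL(\CC^{m+p})$ in block form $g=(\begin{smallmatrix} a& b\\ c& d\end{smallmatrix})$, the induced automorphism of the Grassmannian sends the graph of $G$ to the column space of $(\begin{smallmatrix} a+bG\\ c+dG\end{smallmatrix})$, which equals the graph of $(c+dG)(a+bG)^{-1}$ wherever $a+bG$ is invertible.

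Using the transfer-function identity displayed just after~\eqref{Eq:SFE}, namely $\widetilde G(s)=T^{-1}G(s)(I-QT^{-1}G(s))^{-1}W$, I would take
\[
   g\ :=\ \begin{pmatrix} W^{-1} & -W^{-1}QT^{-1}\\ 0 & T^{-1}\end{pmatrix}\,,
\]
which lies in $\GL(\CC^{m+p})$ because $W$ and $T$ are invertible. A one-line computation with the blocks above gives $a+bG=W^{-1}(I-QT^{-1}G)$ and $c+dG=T^{-1}G$, hence $(c+dG)(a+bG)^{-1}=T^{-1}G(I-QT^{-1}G)^{-1}W=\widetilde G$; so the automorphism induced by $g$ carries $\gamma_\Sigma(s)$ to $\gamma_{\widetilde\Sigma}(s)$ at every $s$ for which $G(s)$ and $(I-QT^{-1}G(s))^{-1}$ are defined. (Reassuringly, $g$ does not involve $L$, as it should not, since a state-space coordinate change leaves the transfer function unchanged.)

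The only point requiring care is that $G$ and $\widetilde G$ need not share poles and $I-QT^{-1}G(s)$ may be singular at finitely many further values of $s$, so the identity $g\cdot\gamma_\Sigma=\gamma_{\widetilde\Sigma}$ is first obtained only on a dense Zariski-open subset of $\PP^1$. To conclude I would invoke that $s\mapsto g\cdot\gamma_\Sigma(s)$ and $\gamma_{\widetilde\Sigma}$ are both morphisms $\PP^1\to\Gr_m\CC^{m+p}$, and two morphisms from an irreducible variety to a separated variety that agree on a dense subset coincide; this promotes the equality to all of $\PP^1$ and yields the desired equivalence, induced by $g\in\GL(\CC^{m+p})$. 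I do not expect any genuine obstacle beyond the block-matrix bookkeeping and this density/extension argument.
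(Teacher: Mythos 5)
Your proof is correct and is essentially the standard argument the paper invokes without proof (it simply calls the proposition ``standard''): the block upper-triangular matrix $g=\left(\begin{smallmatrix} W^{-1} & -W^{-1}QT^{-1}\\ 0 & T^{-1}\end{smallmatrix}\right)$ realizes the feedback transfer-function transformation as a linear-fractional action on graphs, and your density/extension step correctly upgrades the pointwise identity to an equality of morphisms on all of $\PP^1$. No gaps to report.
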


\subsection{Symmetric linear systems}
A linear system is \demph{symmetric} if $p=m$ and the transfer function is symmetric, $G(s)^T=G(s)$.
Symmetric linear systems have symmetric state space realizations~\cite{Fuhr}, where $A^T=A$ and $C^T=B$.
If $G(s)$ is symmetric, then there is a symplectic structure $\omega$ on $\CC^{2m}$ such that the
Hermann-Martin curve in $\Gr_m\CC^{2m}$ lies in the Lagrangian Grassmannian,
$\defcolor{L(m)}=\Gr_mV\cap\calH(\wedge^m\CC^{2m},\omega)$~\cite{HRW,HS}, and vice-versa: if the
Hermann-Martin curve of
a linear system lies in the Lagrangian Grassmannian for a symplectic structure, then that linear system is
state feedback equivalent to a symmetric linear system.
By the discussion of Subsection~\ref{SS:PP} and the same reasoning as Theorem~\ref{Th:LDO}, we deduce the
following.

\begin{theorem}\label{Th:SFC}
 If a controllable and observable linear system is state feedback equivalent to a symmetric system and the
 pole placement map is proper, then it is a self-adjoint generalized Wronski map.
\end{theorem}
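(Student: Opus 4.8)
The plan is to reduce this theorem to the already-established machinery of Section~\ref{S:PnG}, in exact parallel with the proof of Theorem~\ref{Th:LDO}. The chain of identifications is: a symmetric transfer function gives a Hermann-Martin curve lying in a Lagrangian Grassmannian, a Lagrangian curve forces its annihilator (the center of the pole placement projection) to contain the subspace $\theta\wedge(\wedge^{m-2}V)$, and the latter contains the $(-1)$-eigenspace of the Lagrangian involution $\frakL$, so Corollary~\ref{C:main} applies. The word "self-adjoint" in the statement is, by the definition given just before Subsection~\ref{SS:PP}, precisely the condition that $Z$ contain $\theta\wedge(\wedge^{m-2}V)$ for some symplectic structure, so most of the work is bookkeeping.

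First I would use state feedback equivalence to replace the given system by a genuinely symmetric realization $\Sigma=(A,B,C)$ with $A^T=A$, $C^T=B$; this is legitimate because, by the Proposition closing Subsection~\ref{SS:PP}, state feedback equivalent realizations have $\GL(\CC^{m+p})$-equivalent Hermann-Martin curves, hence the pole placement map is unchanged up to an automorphism of $\Gr_m\CC^{2m}$ induced by $\GL(\CC^{2m})$, which carries a self-adjoint projection to a self-adjoint projection and preserves properness and the degree of the branched cover. Since the system is symmetric, $p=m$ and $\dim\CC^{m+p}=2m$. Next I invoke the cited result (\cite{HRW,HS}) that a symmetric transfer function yields a symplectic form $\omega$ on $\CC^{2m}$ such that the Hermann-Martin curve $\gamma_\Sigma$ takes values in $L(m)=\Gr_m\CC^{2m}\cap\calH(\wedge^m\CC^{2m},\omega)$; equivalently, each $\gamma_\Sigma(s)$ is a Lagrangian subspace.

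Now let $U\subset\wedge^m\CC^{2m}$ be the linear span of $\wedge^m\gamma_\Sigma(s)$ over $s\in\PP^1$, and $Z:=U^\perp$ the center of the pole placement map $\pi_Z$, exactly as in the discussion preceding the theorem. Because every $\gamma_\Sigma(s)$ is Lagrangian, every spanning tensor of $U$ lies in $\calH(\wedge^m\CC^{2m},\omega)$, so $U\subset\calH(\wedge^m\CC^{2m},\omega)$. Dualizing (and using that the decomposition~\eqref{Eq:Decomposition} for $k=2m$, i.e. the decomposition of $\wedge^m$, is self-dual, as noted for~\eqref{Eq:SAsubset}), the annihilator $Z=U^\perp$ contains every summand $\wedge^p\theta\wedge\calH(\wedge^{m-2p}V,\omega)$ with $p\geq 1$, and the union of these over $1\le p\le\lfloor m/2\rfloor$ is precisely $\theta\wedge(\wedge^{m-2}V)$ — the same computation as~\eqref{Eq:SAsubset}. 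Thus $Z\supset\theta\wedge(\wedge^{m-2}V)$, which by definition makes $\pi_Z$ a self-adjoint generalized Wronski map. (Since the system is controllable and observable, the minimal realization and hence the curve $\gamma_\Sigma$ are well-defined; and since the pole placement map is assumed proper, $\PP(Z)$ is disjoint from $\Gr_m\CC^{2m}$, so $\pi_Z$ is honestly a generalized Wronski map in the sense of Section~\ref{S:PnG}.)

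The only genuinely substantive input, and the step I would flag as the potential obstacle, is the transfer of symmetry into a symplectic structure with the Hermann-Martin curve Lagrangian: this is the content of \cite{HRW,HS} and is being quoted rather than reproved. Everything after that — the containment $U\subset\calH$, the self-dual decomposition of $\wedge^m$, and the invocation of Corollary~\ref{C:main} through the definition of self-adjointness — is formal and parallels Theorem~\ref{Th:LDO} line for line; in particular no new estimate or construction is needed beyond what Section~\ref{S:PnG} already provides. One should take a little care that the equivalence used to symmetrize the system is by a genuine linear automorphism of $\CC^{2m}$ (not merely an analytic automorphism of the Grassmannian), which is exactly what the Proposition at the end of Subsection~\ref{SS:PP} guarantees, so that "self-adjoint generalized Wronski map" — a condition on the center $Z\subset\wedge^m\CC^{2m}$ — is preserved.
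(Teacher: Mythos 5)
Your proposal is correct and follows essentially the same route as the paper: the paper's proof is exactly the reduction you describe, namely quoting that a system state feedback equivalent to a symmetric one has (up to a $\GL(\CC^{2m})$-equivalence) a Hermann-Martin curve in the Lagrangian Grassmannian~\cite{HRW,HS}, so the span $U$ of the curve's Pl\"ucker images lies in $\calH(\wedge^m\CC^{2m},\omega)$ and its annihilator $Z$ contains $\theta\wedge(\wedge^{m-2}V)$, exactly as in the argument for Theorem~\ref{Th:LDO}. No discrepancies to flag.
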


The pole placement map for symmetric systems therefore has even degree.

\begin{corollary}
 Given a general feedback law $K$ for a symmetric linear system $\Sigma$, there are an odd number of other
 feedback laws $K_2,\dotsc,K_{2r}$ such that $K,K_2,\dotsc,K_{2r}$ all have the same image
 under the pole placement map.
\end{corollary}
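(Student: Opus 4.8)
The plan is to deduce this corollary directly from Theorem~\ref{Th:SFC} together with Corollary~\ref{C:main}. First I would recall that by Theorem~\ref{Th:SFC}, the pole placement map for $\Sigma$ is a self-adjoint generalized Wronski map $\pi_Z\colon\Gr_m\CC^{2m}\to\PP((\wedge^m\CC^{2m})/Z)$, so in particular its center $Z$ contains the $(-1)$-eigenspace of the Lagrangian involution $\frakL$ and $\PP(Z)$ is disjoint from the Grassmannian. By Corollary~\ref{C:main}, $\frakL$ then acts on the fibers of $\pi_Z$, and the map has even degree $2r$ over its image. Since the pole placement map, in the identification of Subsection~\ref{SS:PP}, sends a feedback law $K$ to the column space of $\left(\begin{smallmatrix}K\\I_m\end{smallmatrix}\right)$, which is an honest point of $\Gr_m\CC^{2m}$, and the image is identified with a space of characteristic polynomials, the fiber over $P_\Sigma(K)$ consists of exactly those feedback laws with the same characteristic polynomial.

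Next I would argue that for a general feedback law $K$ the fiber through $K$ is reduced of cardinality exactly $2r$. Because $\pi_Z$ is a finite morphism (being proper with finite fibers, as in Subsection~\ref{SS:projection}) of degree $2r$ onto its image, there is a dense open subset of the image over which the fiber has exactly $2r$ distinct points; pulling back, a general $K$ lies in such a fiber. One of those $2r$ points is $K$ itself; the remaining $2r-1$ points are distinct feedback laws $K_2,\dotsc,K_{2r}$, and $2r-1$ is odd. I would also note that genericity is needed only to ensure the generic fiber is reduced and that $K$ actually corresponds to a point of $\Mat_{m\times m}\CC$ rather than to an $m$-plane not of the form $\left(\begin{smallmatrix}K\\I_m\end{smallmatrix}\right)$; the latter locus is a proper closed subset, so a general $K$ avoids it and so do the other points in its fiber, since $\frakL$ (being the fiber-permuting involution) preserves the big cell on a dense open set.

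The main obstacle, such as it is, is bookkeeping rather than mathematics: one must be careful that "general" is interpreted in the feedback-law parameter space $\Mat_{m\times m}\CC$ and not merely in $\Gr_m\CC^{2m}$, and that all $2r$ preimages of a general value actually lie in the affine chart parametrized by feedback laws. This follows because the complement of that chart in $\Gr_m\CC^{2m}$ maps into a proper closed subvariety of the image (its image has strictly smaller dimension, or else one argues directly that it is $\frakL$-invariant of smaller dimension), so for $K$ outside a proper closed subset of $\Mat_{m\times m}\CC$ the whole fiber stays in the chart and is reduced. Once that is checked, the count "$1 + (\text{odd}) = \text{even}$" is immediate, and the corollary follows.
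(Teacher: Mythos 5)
Your argument is correct and follows the same route the paper intends: Theorem~\ref{Th:SFC} makes the pole placement map a self-adjoint generalized Wronski map, Corollary~\ref{C:main} gives even degree $2r$, and a general fiber is reduced with all $2r$ points in the affine chart of feedback laws, leaving $2r-1$ (an odd number of) other laws. The paper states the corollary without proof as an immediate consequence, and your added care about genericity and the complement of the chart (whose image is a proper closed subset since the map is finite) is exactly the bookkeeping that makes the implicit argument rigorous.
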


We also have a converse to Theorem~\ref{Th:SFC}.

\begin{corollary}
 If the pole placement map is a self-adjoint generalized Wronski map, then the given system is state feedback
 equivalent to a symmetric system.
\end{corollary}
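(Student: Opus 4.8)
The final statement to prove is: \emph{If the pole placement map is a self-adjoint generalized Wronski map, then the given system is state feedback equivalent to a symmetric system.}

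The plan is to reverse the implications that were established in Section~\ref{S:control}. By hypothesis the center $Z$ of the projection realizing the pole placement map contains $\theta\wedge(\wedge^{m-2}\CC^{2m})$ for some symplectic form $\omega$ on $\CC^{2m}$ with dual $\theta\in\wedge^2\CC^{2m}$. The pole placement map comes from the Hermann-Martin curve $\gamma_\Sigma\colon\PP^1\to\Gr_m\CC^{2m}$, with $U\subset\wedge^m\CC^{2m}$ the linear span of $\gamma_\Sigma(\PP^1)$ (Pl\"ucker coordinates of the curve) and $Z=U^\perp$. First I would unwind the containment $Z\supseteq\theta\wedge(\wedge^{m-2}\CC^{2m})$ into a statement about $U$: dualizing, $U\subseteq(\theta\wedge(\wedge^{m-2}\CC^{2m}))^\perp$. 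Using the decomposition~\eqref{Eq:Decomposition} for $k=m$ and the fact (as in the proof of Theorem~\ref{Th:LDO}) that $(\theta\wedge(\wedge^{m-2}V))^\perp=\calH(\wedge^m V,\omega)$ with respect to the pairing between $\wedge^mV$ and $\wedge^mV^*$, this says precisely that $U\subseteq\calH(\wedge^m\CC^{2m},\omega)$, i.e. every Pl\"ucker point $\wedge^m E^{(m-1)}(t)$ of the Hermann-Martin curve is an isotropic (hence Lagrangian) tensor.

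The key step is then to pass from ``the Hermann-Martin curve lies in the Lagrangian Grassmannian $L(m)$'' to ``the system is state feedback equivalent to a symmetric one.'' This is exactly the ``vice versa'' direction quoted from~\cite{HRW,HS} in Subsection~\ref{SS:control}: if the Hermann-Martin curve of a linear system lies in the Lagrangian Grassmannian for some symplectic structure, then that system is state feedback equivalent to a symmetric linear system. So after identifying the geometric content of the self-adjointness hypothesis with the Lagrangian condition on the curve, I would invoke that result to conclude. One small subtlety to address: the generalized Wronski map could in principle be realized by more than one curve or more than one symplectic form, but the argument only needs the existence of \emph{one} symplectic $\omega$ making $U$ isotropic, which is exactly what self-adjointness supplies, and then $\gamma_\Sigma$ lands in $L(m)$ for that $\omega$.

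I would organize the proof as: (1) recall that $Z=U^\perp$ where $U=\Span_t\wedge^m E^{(m-1)}(t)$; (2) rewrite self-adjointness $Z\supseteq\theta\wedge(\wedge^{m-2}\CC^{2m})$ as $U\subseteq\calH(\wedge^m\CC^{2m},\omega)$ via the orthogonality complementation and the representation-theoretic identification of $\calH$ from Subsection~\ref{SS:symplectic}; (3) conclude that each point of the Hermann-Martin curve is Lagrangian, so $\gamma_\Sigma\colon\PP^1\to L(m)$; (4) apply the converse statement of~\cite{HRW,HS} to obtain state feedback equivalence with a symmetric system. The main obstacle I anticipate is step (2): one must be careful that the annihilator is taken with respect to the correct pairing $\wedge^m V\times\wedge^m V^*\to\CC$ and that the duality-compatibility of the decomposition~\eqref{Eq:Decomposition} (used already in deriving~\eqref{Eq:SAsubset}) is applied in the right direction, so that $(\theta\wedge\wedge^{m-2}V)^\perp$ is identified with $\calH(\wedge^mV^*,\omega^*)$ rather than with some other summand; once that identification is in hand, the rest is a direct citation.
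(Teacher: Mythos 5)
Your proposal is correct and follows essentially the same route as the paper: self-adjointness of the center forces the Hermann--Martin curve into the Lagrangian Grassmannian $L(m)=\Gr_m\CC^{2m}\cap\calH(\wedge^m\CC^{2m},\omega)$, and the converse statement quoted from~\cite{HRW,HS} then gives state feedback equivalence to a symmetric system. The only difference is that you spell out the duality step identifying $(\theta\wedge(\wedge^{m-2}V))^\perp$ with $\calH$, which the paper leaves implicit; this is a harmless (indeed welcome) elaboration, not a different argument.
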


\begin{proof}
 If the pole placement map is self-adjoint, then the Hermann-Martin curve lies in the Lagrangian
 Grassmannian for some symplectic form.
 But this implies that the original system was state feedback equivalent to a symmetric linear system.
\end{proof}


\providecommand{\bysame}{\leavevmode\hbox to3em{\hrulefill}\thinspace}
\providecommand{\MR}{\relax\ifhmode\unskip\space\fi MR }
\providecommand{\MRhref}[2]{%
  \href{http://www.ams.org/mathscinet-getitem?mr=#1}{#2}
}
\providecommand{\href}[2]{#2}

\end{document}